\crefname{section}{Section}{Sections}
\crefname{subsection}{\S}{\S\S}
\crefname{definition}{definition}{definitions}
\crefname{ex}{example}{examples}
\crefname{remark}{remark}{remarks}
\crefname{convention}{convention}{conventions}
\crefname{lemma}{lemma}{lemmas}
\crefname{theorem}{theorem}{theorems}
\crefname{assumption}{assumption}{Assumptions}
\crefname{equation}{}{}
\newtheorem{proposition}{Proposition}[section]
  \newtheorem{theorem}[proposition]{Theorem}
  \newtheorem{corollary}[proposition]{Corollary}
  \newtheorem{lemma}[proposition]{Lemma}
\theoremstyle{definition}
  \newtheorem{definition}[proposition]{Definition}
  \newtheorem{remark}[proposition]{Remark}
  \newtheorem{example}[proposition]{Example}
   \newtheorem{problem}[proposition]{Problem}
\newcommand{\cst}{\ifmmode\mathrm{C}^*\else{$\mathrm{C}^*$}\fi}
\newcommand\cA{\mathcal A}
\newcommand\cB{\mathcal B}
\newcommand\cC{\mathcal C}
\newcommand\cD{\mathcal D}
\newcommand\cH{\mathcal H}
\newcommand\cN{\mathcal N}
\newcommand\cV{\mathcal V}
\newcommand\cW{\mathcal W}
\newcommand{\kk}{\mathbbm{k}}
\newcommand{\CC}{\mathbb{C}}
\newcommand{\id}{\mathrm{id}}
\newcommand{\I}{\mathds{1}}
\newcommand{\ww}{\mathrm{W}}
\numberwithin{equation}{section}
\def\labelitemi{$\blacktriangleright$}
\author{Pawe{\l} Kasprzak}
\address{Department of Mathematical Methods in Physics, Faculty of Physics, University of Warsaw, Poland}
\email{pawel.kasprzak@fuw.edu.pl}
\title[Generalized (co)integrals]{Generalized (co)integrals on coideal subalgebras}
\subjclass[2010]{Primary: 46L65 Secondary: 43A05, 46L30, 60B15, 16T05, 16T15}
 \keywords{  coideal subalgebra}
\begin{document}
\begin{abstract}
Given a  a Hopf algebra  $\cH$,   a left coideal subalgebra $\cA\subset \cH$ and a non-zero multiplicative functional $\mu\in\cA'$, we consider the space of  left    $\mu$-integrals $L^\cA_\mu\subset\cA$, where $\Lambda\in L^\cA_\mu$ if $a\Lambda= \mu(a)\Lambda$ for all $a\in\cA$. We observe that $\dim L^\cA_\mu=1$ if $\cA$ is a Frobenius algebra and we  conclude this equality   for finite dimensional left coideal subalgebras of a weakly finite Hopf algebra. In general we prove that if $\dim L^\cA_\mu>0$ then $\dim\cA<\infty$.  Given a group-like element $g\in\cH$ we  consider  the space $L^g_{\cA}\subset\cA'$ of $g$-cointegrals on $\cA$, where $\phi\in L^g_{\cA}$ if $(\id\otimes\phi)(\Delta(a)) = g\phi(a)$ for all $a\in \cA$. We prove that if $\cA$ admits a non-degenerate $\mu$-integral then $\dim L^g_{\cA}\in\{0,1\}$. Linking further both concepts we prove that:
\begin{itemize}
    \item every semisimple left coideal subalgebra  $\cA\subset \cH$ which is preserved by the antipode squared admits a  faithful $\I$-cointegral;
    \item every unimodular finite dimensional left coideal subalgebra $\cA\subset \cH$ admitting a faithful $\I$-cointegral is preserved by the antipode squared;
    \item every non-degenerate right group-like projection in a cosemisimple Hopf algebra is a two sided group-like projection.
\end{itemize} 
Finally we list all $\varepsilon$-integrals for left  coideals subalgebras in Taft algebras and we  list all $g$-cointegrals on them. 
\end{abstract}

\maketitle

\setcounter{tocdepth}{1}

\newlength{\sw}
\settowidth{\sw}{$\scriptstyle\sigma-\text{\rm{weak closure}}$}
\newlength{\nc}
\settowidth{\nc}{$\scriptstyle\text{\rm{norm closure}}$}
\newlength{\ssw}
\settowidth{\ssw}{$\scriptscriptstyle\sigma-\text{\rm{weak closure}}$}
\newlength{\snc}
\settowidth{\snc}{$\scriptscriptstyle\text{\rm{norm closure}}$}
\renewcommand{\labelitemi}{$\bullet$}
\section{Introduction}
The theory of finite dimensional Hopf algebras is centered around the concept of integrals  (c.f. e.g.  \cite{Lar_Sweed}, \cite{Radf_194}, \cite{Sull}, \cite{Sweed}, \cite{VD}). Integrals for Hopf algebras were first considered by Larson and Sweedler in \cite{Lar_Sweed} where they proved their existence and uniqueness in the finite dimensional case. Our main reference to the subject of Hopf algebras and (co)integrals on them will be  \cite{Radford_book}, and \cite[Section 10]{Radford_book} especially, where the theory of left integrals assigned to a non-zero multiplicative functional $\mu\in\cH'$  is fully developed. Defining \[L^\cH_\mu = \{\Lambda\in\cH:a\Lambda = \mu(a)\Lambda\}\] it turns out that  we have $\dim L^\cH_\mu = 1$ if and only if $\dim\cH < \infty$ and $L^\cH_\mu =\{0\}$ otherwise, see \cite[Proposition 10.2.1.]{Radford_book} and \cite[Proposition 10.6.2]{Radford_book}. Moreover if $\Lambda\neq 0$, then  the map $\cH'\ni\mu\mapsto\Lambda*\mu:=(\mu\otimes\id)\Delta(\Lambda)\in\cH$ is a right $\cH'$-modules isomorphism, c.f. \cite[Theorem 10.2.2]{Radford_book} (here $\cH'$ stands for the dual of $\cH$). 

The dual to    $\mu$-integral is the concept of  $g$-cointegral on a Hopf algebra $\cH$: given a group-like element $g\in\cH$ we define $L^g_{\cH}\subset\cH'$, where
\[L^g_{\cH} = \{\phi\in\cH':(\id\otimes\phi)((\Delta(a)) = g\phi(a)\textrm{ for all } a\in\cH\}.\] 
Using \cite[Theorem 10.9.5]{Radford_book}, \cite[Theorem 10.9.2]{Radford_book} we see that $\dim L^g_{\cH}\in\{0,1\}$. Moreover if $\phi\in L^g_{\cH}$ is non-zero then it is faithful. Note that  it may happen that $\dim L^g_{\cH} = 1$ for $\dim\cH=\infty$. Indeed this is the case for each inifinite dimensional cosemisimple Hopf algebra (for the conditions equivalent to $\dim L^g_{\cH} = 1$ see \cite[Theorem 10.9.2]{Radford_book}). 

Let $\cA$ be a left coideal subalgebra of $\cH$, $\mu\in\cA'$ a non-zero multiplicative functional on $\cA$ and $g\in\cH$ a group-like element.
The  concepts of  $\mu$-integrals and $g$-cointegrals still makes sense in this context and hence the  notation  $L^\cA_\mu\subset \cA, L^g_{\cA}\subset \cA'$ will be used throughout the paper. Our study  was initiated   in \cite{Chir_Kasp_Szu}, where the case of  $\varepsilon$-integrals $\Lambda\in L_\varepsilon^\cA$ satisfying $\Lambda^2 = \Lambda$  and $\Lambda a = \Lambda\varepsilon(a)$ for all $a\in\cA$  were analyzed thoroughly. They were linked with the theory of right  group-like projections in $\cH$ and it was proved in \cite{Chir_Kasp_Szu} that $\dim \cA<\infty$ if $\cA$  admits a non-zero right group like projection $P\in L_\varepsilon^\cA$. Moreover the existence of non-zero integrals   was proved for semisimple $\cA$ and the converse result was obtained at least when $S^2(\cA) = \cA$ (see \cite[Corollary 3.26]{Chir_Kasp_Szu},  \cite[Theorem 3.27]{Chir_Kasp_Szu}).  
In the present  paper we generalize  the finite dimensionality result, showing that if $\dim L^\cA_\mu>0$ then $\dim\cA<\infty$, c.f. \Cref{thm_fd}. Moreover we observe that if $\cA$ is a Frobenius algebra (e.g. when $\dim\cH<\infty$, c.f. \cite[Theorem 6.1]{skr-proj}) then $\dim L^\cA_\mu = 1$, see \Cref{thm_dim}. The non-degeneracy  of $\Lambda$ is also a matter of our study,  where we say that  $\Lambda\in L_\mu^\cA$ is non-degenerate  if the map  
\[\cH'\ni\mu\mapsto \Lambda*\mu\in\cA\] is onto. We prove this to hold if $\cA$ is Frobienius, see \Cref{biann_non}.   The question of non-degeneracy of an arbitrary $\mu$-integral is left open.

The concepts of integrals and conitegrals on left coideals subalgebras turn  out to be linked  intimately. 
In particular we observe  that if $\cA\subset \cH$  admits a non-degenerate $\mu$-integral then $\dim L^g_\cA\in\{0,1\}$ and if $\phi\in L^g_\cA$ is non-zero then it is faithful, c.f. \Cref{thm_gcoin} and \Cref{cor_frob}.  Moreover we prove that (see \Cref{uni_cor})
   \begin{itemize}
    \item every semisimple left coideal subalgebra of $\cH$ which is preserved by the antipode squared admits an essentially  unique faithful $\I$-cointegral;
    \item every unimodular finite dimensional left coideal subalgebra $\cA$ admitting a faithful $\I$-cointegral is preserved by the antipode squared.
\end{itemize} 
Furthermore we get  \Cref{cor_glS} which can be viewed as a certain   version of   \cite[Proposition 1.6]{LV}; the latter  guaranties,   that every right group-like projection in an algebraic quantum group is two sided. Our result   shows that every right group-like projection in a weakly finite cosemisimple Hopf algebra must be two sided. More generally we proved that every non-degenerate right group-like projection in a cosemisimple Hopf algebra is two sided, see \Cref{rem_LV}.

Using our results and the results of \cite{Chir_Kasp_Szu} we give a list of  all  $\varepsilon$-integrals on left coideal subalgebras of Taft algebras $\cH_{n^2}$  and describe all $g$-cointegrals on them, see \Cref{example_Taft}. 

\section{Preliminaries}\label{Prelim_Sec} All vector spaces considered in this paper are over an arbitrary field $\kk$ if not specified otherwise.
The   vector space dual of $\cV$ will be denoted $\cV'$.  If $\cA$ is an algebra  then $\cA'$ is $\cA$-bimodule, where for $a,b\in\cA$ and $\omega\in\cA'$ we define $\omega\cdot b,a\cdot \omega\in\cA'$ by\[(\omega\cdot b)(a)=(a\cdot \omega)(b) = \omega(ba).\] Let $\cV$ be a left $\cA$-module. We say that an element $v\in\cV$ is cyclic if the map $\cA\ni a\mapsto av\in\cV$ is surjective. If this map is injective then we say that $v$ is separating. Similarly we define cyclic elements for right $\cA$-modules. 
If $\cV$ admits a cyclic element then we say that it is a cyclic module. If the corresponding map $\pi:\cA\to \textrm{End}(\cV)$  is injective then we sat that $\cV$ is faithful. 

A subspace $I\subset \cA$ is said to be a left (right) ideal if $ab\in I$ ($ba\in I$ respectively) for every $a\in\cA$ and $b\in I$. Given a left ideal $I\subset \cA$ we define its right annihilator
\begin{equation}\label{def_lr}r(I) = \{a\in\cA:ba = 0 \textrm{ for all } b\in I\}.\end{equation} The left annihilator $l(I)$  of a  right ideal  is define analogously. Note that if $I$ is a left ideal then  $r(I)$ is a right ideal and $I\subset l(r(I))$. Similarly $I\subset r(l(I))$ for a right ideal $I\subset \cA$. For the needs of our paper we introduce the biannihilator condition.
\begin{definition}\label{biann}
 An algebra $\cA$ is said to satisfy a {\it biannihilator condition} if $l(r(I)) = I$ for every left ideal $I\subset \cA$ and $r(l(I')) = I'$ for every right ideal $I'\subset \cA$. 
 
 A finite-dimensional algebra $\cB$ is quasi-frobenius (or QF) if it is injective as a
module over itself.
\end{definition}
Let us recall that  injectivity of $\cA$-module $\cV$ means that whenever $\cV$ is a submodule of an arbitrary $\cA$ module $\cW$  then it has a complementary submodule in $\cW$. 
It was noted in \cite[Proposition 3.33]{Chir_Kasp_Szu} that QF for $\cA$  implies the  biannihilator condition for $\cA$. 
\begin{definition}\label{def_frob}
Let $\cA$ be a finite dimensional unital algebra  and  $\sigma:\cA\times\cA\to\kk$ a bilinear form on $\cA$. We say that $(\cA,\sigma)$ is a Frobenius algebra if $\sigma$ is  non-degenerate  and satisfies  $\sigma(ab,c) = \sigma(a,bc)$ for all $a,b,c\in\cA$.
\end{definition}
We shall usually write that $\cA$ is a Frobenius algebra having in mind that $\sigma$ entering \Cref{def_frob} is fixed.  
\begin{definition}
Let $\cA$ be a finite dimensional unital algebra. We say that a functional $\omega\in\cA'$ is faithful if  given $a\in\cA$ satisfying    $\omega(ab)=0$ for  all $b\in\cA$ we have $a=0$. 
\end{definition} Let us note that, we can define faithfulness of $\omega\in\cA$ by the condition: if $\omega(ba)=0$ for all $b\in\cA$ then $a=0$. Actually both properties are equivalent,  see e.g. the paragraph following \cite[Definition 1.4]{VD}.  
\begin{theorem}\label{thm_frob_cond}
Let $\cA$ be a finite dimensional unital algebra.
 The following conditions are equivalent:
 \begin{enumerate}
 \item $\cA$ is a Frobenius algebra;
 \item $\cA$ admits a faithful functional;
 \item $\cA$ and $\cA'$ are isomorphic as left (or as right) $\cA$-modules;
 \item $\cA'$ admits a cyclic (or separating) vector.
 \end{enumerate}
\end{theorem}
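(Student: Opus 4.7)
The plan is to prove the theorem by establishing a small cycle of implications, exploiting the natural correspondences between functionals on $\cA$, associative bilinear forms on $\cA$, and $\cA$-module homomorphisms $\cA \to \cA'$. The result is classical and I do not expect any substantive obstruction; the only point demanding care is tracking the left/right bimodule conventions introduced in \Cref{Prelim_Sec}.

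For $(1) \Leftrightarrow (2)$ I would use the bijection between bilinear forms and functionals given by $\sigma(a,b) = \omega(ab)$ in one direction and $\omega(a) = \sigma(a,1)$ in the other. Under this bijection associativity of $\sigma$ is automatic from associativity of the product in $\cA$, while non-degeneracy of $\sigma$ translates directly into faithfulness of $\omega$; the left-sided form of non-degeneracy corresponds to the equivalent ``$\omega(ba)=0$ for all $b$ implies $a = 0$'' condition noted in the paragraph preceding the theorem.

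For $(2) \Leftrightarrow (3)$ the key tool is the canonical right $\cA$-module map $\Phi_\omega \colon \cA \to \cA'$ defined by $\Phi_\omega(a) = \omega \cdot a$. It satisfies $\Phi_\omega(1) = \omega$, and its kernel consists precisely of those $a$ with $\omega(ab) = 0$ for all $b \in \cA$, so $\Phi_\omega$ is injective exactly when $\omega$ is faithful. Since $\dim \cA = \dim \cA'$, injectivity upgrades to bijectivity, yielding $(2) \Rightarrow (3)$. Conversely any right $\cA$-module isomorphism $\Phi \colon \cA \to \cA'$ is determined by $\omega := \Phi(1)$ via $\Phi(a) = \omega \cdot a$, and injectivity of $\Phi$ forces $\omega$ to be faithful. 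The left-module version is entirely parallel using $a \mapsto a \cdot \omega$ and the symmetric formulation of faithfulness.

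Finally, $(3) \Leftrightarrow (4)$ is a dimension-and-cyclicity argument. The unit $1 \in \cA$ is both a cyclic generator and a separating vector for $\cA$ viewed as a module over itself, so any isomorphism $\Phi \colon \cA \to \cA'$ transports it to a cyclic-and-separating vector $\Phi(1) \in \cA'$, giving $(3) \Rightarrow (4)$. Conversely, a cyclic (respectively separating) vector $\omega \in \cA'$ makes $\Phi_\omega$ surjective (respectively injective), and the dimension equality $\dim \cA = \dim \cA'$ promotes either condition to bijectivity, so either formulation in $(4)$ produces the module isomorphism of $(3)$ and the two versions of $(4)$ are in fact equivalent.
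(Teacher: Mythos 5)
Your proof is correct and follows essentially the same route the paper takes: the paper only sketches the argument in \Cref{rem_f}, but its ingredients are exactly yours, namely the correspondences $\sigma\mapsto\omega_\sigma$, $\omega\mapsto\sigma_\omega$ for $(1)\Leftrightarrow(2)$ and the maps $a\mapsto a\cdot\omega$, $a\mapsto\omega\cdot a$ together with the dimension count $\dim\cA=\dim\cA'$ for $(2)\Leftrightarrow(3)\Leftrightarrow(4)$. Your handling of the left/right conventions and of the two equivalent forms of faithfulness is consistent with the bimodule structure fixed in \Cref{Prelim_Sec}, so no gaps remain.
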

\begin{remark}\label{rem_f}Let us comment on the proof of \Cref{thm_frob_cond} and incidentally fix some notation. Given a non-degenerate bilinear form $\sigma$ on $\cA$, the functional $\omega_\sigma\in\cA'$ such that $\omega_\sigma(a):=\sigma(\I,a)$ is faithful. Conversely, given a faithful functional $\omega\in\cA$, the bilinear form $\sigma_\omega$ on $\cA$ defined by $\sigma_\omega(a,b) = \omega(ab)$ satisfies the conditions of \Cref{def_frob}. If $\omega\in\cA'$ is a cyclic element then $\cA\ni a \mapsto a\cdot \omega\in\cA'$ is a surjective map of $\cA$-modules, and since $\dim\cA = \dim\cA'$ the map is actually bijective. In particular $\omega$ is faithful.
\end{remark}
For the latter needs let us also observe the following:
\begin{lemma}\label{unital}
Let $\cA$ be a finite dimension algebra. If $\cA$ admits a faithful functional then $\cA$ is unital. 
\end{lemma}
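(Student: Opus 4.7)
\bigskip

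The plan is to translate faithfulness of $\omega$ into a pair of bijections between $\cA$ and $\cA'$, use them to produce one-sided units, and then show the one-sided units coincide. The argument never uses a unit of $\cA$, so it applies in the (possibly) non-unital setting.

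First I would record the two dual forms of faithfulness. By assumption, $\omega(ab)=0$ for all $b\in\cA$ implies $a=0$, and the paragraph after \Cref{thm_frob_cond} notes that the mirror condition $\omega(ba)=0$ for all $b\in\cA$ implies $a=0$ as well. In terms of the $\cA$-bimodule structure on $\cA'$ recalled in \Cref{Prelim_Sec}, these say respectively that the left $\cA$-module map
\[
\cA\ni a\longmapsto a\cdot\omega\in\cA',\qquad (a\cdot\omega)(b)=\omega(ba),
\]
and the right $\cA$-module map $a\mapsto \omega\cdot a$ are both injective. Since $\dim\cA=\dim\cA'<\infty$, both maps are bijections of vector spaces; in particular $\omega$ is a cyclic vector for $\cA'$ with respect to each module structure.

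Next I would produce a right unit. Using surjectivity of $a\mapsto a\cdot\omega$, pick $e_L\in\cA$ with $e_L\cdot\omega=\omega$, i.e.\ $\omega(be_L)=\omega(b)$ for every $b\in\cA$. For arbitrary $a,b\in\cA$ this yields
\[
\omega\bigl(b(ae_L-a)\bigr)=\omega\bigl((ba)e_L\bigr)-\omega(ba)=0,
\]
so by faithfulness $ae_L=a$ for all $a\in\cA$, and $e_L$ is a right unit. In exactly the same way, surjectivity of $a\mapsto\omega\cdot a$ gives $e_R\in\cA$ with $\omega(e_Rb)=\omega(b)$ for all $b$, whence $\omega((e_Ra-a)b)=\omega(e_R(ab))-\omega(ab)=0$ and the mirror form of faithfulness yields $e_Ra=a$ for all $a$, so $e_R$ is a left unit. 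Finally $e_L=e_Re_L=e_R$ is a two-sided unit of $\cA$.

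There is no serious obstacle here; the only subtlety is that the usual argument via \Cref{thm_frob_cond}(4) is not directly available because that theorem is stated under a unitality hypothesis. The trick is simply to notice that the faithfulness hypothesis separately provides left and right cyclic generators of $\cA'$, and that from each of them one extracts a one-sided unit by the standard computation above.
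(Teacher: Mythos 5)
Your proof is correct and follows essentially the same route as the paper's: use faithfulness plus finite-dimensionality to invert the module maps $a\mapsto \omega\cdot a$ and $a\mapsto a\cdot\omega$, solve for elements fixing $\omega$, deduce one-sided units by faithfulness, and observe that a left and a right unit coincide (the paper leaves the last two steps as ``similarly'' and ``we are done''). One trivial slip: in the step producing $e_R$ you cite ``the mirror form'' of faithfulness, but the identity $\omega((e_Ra-a)b)=0$ for all $b$ actually invokes the original form (element in the left slot); since both forms are available, nothing is affected.
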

\begin{proof}Suppose that  $\omega\in\cA'$ is faithful. 
Due to the bijectivity of the map $\cA\ni a\to \omega\cdot a\in\cA'$ we can find an element $x\in\cA$ such that $\omega\cdot x = \omega$. One can check that $xa = a$ for all $a\in\cA$, i.e. $x$ is a left unit. Similarly we prove that $\cA$ has a right unit and we are done. 
\end{proof}

We will consider only those  Hopf agebras  $(\mathcal{H},\Delta,S,\varepsilon)$ which have   invertible antipode $S$. Our main reference for Hopf algebras is \cite{Radford_book}. We shall often write that $\mathcal{H}$ is a Hopf algebra, having in mind that comultiplication $\Delta$, coinverse $S$ and counit $\varepsilon$ are fixed.The Sweedler's notation  $\Delta(x) = x_{(1)}\otimes x_{(2)}$ will  used.  A vector subspace $\cV\subset \cH$ is said to be a left coideal if $\Delta(\cV)\subset \cH\otimes \cV$. 
A unital  subalgebra $\cA\subset \cH$ which is a left coideal is said to be a left coideal subalgebra.
Given $\mu,\nu\in\cH'$ and $a\in\cH$ we define \begin{itemize}
    \item $\mu*\nu\in\cH'$ by the formula  $(\mu*\nu)(a)= (\mu\otimes\nu)(\Delta(a))$;
    \item $\mu*a\in\cH$ by the formula  $(\id\otimes\mu)(\Delta(a))$;
    \item $a*\mu\in\cH$ by the formula  $(\mu\otimes\id)(\Delta(a))$.
\end{itemize} Note that $(\cH',*,\varepsilon)$ is a unital algebra. If $\dim\cH<\infty$ then dualizing multiplication $m:\cH\otimes\cH\to\cH$, coinverse $S:\cH\to\cH$ and the unit $\eta:\kk\to \cH$  we get (the dual) Hopf algebra $(\cH',m',S',\eta')$.
If $\cV\subset \cH$ is a left coideal then $\cV'$ is a right $\cH'$-module.  The next lemma will be used later.
 \begin{lemma}\label{unitality} Let $\cH$ be a Hopf algebra 
and $y\in\cH$. If   $y\in\cH$  is a non-zero element satisfying  \begin{equation}\label{P_ident}\Delta(y)(\I\otimes y) = \Delta(y) \end{equation} or $(\I\otimes y)\Delta(y) = \Delta(y)$ then $y=\I$.  
In particular if $\cA\subset \cH$ is a non-zero subalgebra which admits a left or a right unit and satisfies $\Delta(\cA)\subset \cH\otimes\cA$ then $\I_\cH\in\cA$. 
 \end{lemma}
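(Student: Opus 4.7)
The plan is to prove the two displayed identities for $y$ first, then deduce the statement about $\cA$ as an easy corollary.

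For the identity $\Delta(y)(\I\otimes y)=\Delta(y)$, I would translate into Sweedler notation as $y_{(1)}\otimes y_{(2)}y = y_{(1)}\otimes y_{(2)}$ and then test against two canonical auxiliary maps. First, applying $\id\otimes\varepsilon$ collapses the right leg and yields $\varepsilon(y)\,y=y$; since $y\neq 0$ and $\varepsilon(y)$ is a scalar, this forces $\varepsilon(y)=1$. Second, applying $m\circ(S\otimes\id)$ and using $S(y_{(1)})y_{(2)}=\varepsilon(y)\I$ gives $\varepsilon(y)\,y=\varepsilon(y)\,\I$, which combined with $\varepsilon(y)=1$ yields $y=\I$. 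The symmetric identity $(\I\otimes y)\Delta(y)=\Delta(y)$ is handled the same way, but now with $m\circ(\id\otimes S)$ in place of $m\circ(S\otimes\id)$: this produces $\varepsilon(y)\,S(y)=\varepsilon(y)\,\I$, hence $S(y)=\I$, and invertibility of $S$ gives $y=S^{-1}(\I)=\I$.

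For the second statement, suppose $\cA\subset\cH$ admits a left (or right) unit $e$. The coideal condition $\Delta(\cA)\subset\cH\otimes\cA$ lets me write $\Delta(e)=\sum_i h_i\otimes a_i$ with $a_i\in\cA$. If $e$ is a left unit of $\cA$, then $ea_i=a_i$ for each $i$, so $(\I\otimes e)\Delta(e)=\sum_i h_i\otimes ea_i=\Delta(e)$, and the first part applies to give $e=\I_\cH\in\cA$. If instead $e$ is a right unit, then $a_ie=a_i$, and $\Delta(e)(\I\otimes e)=\Delta(e)$ gives the same conclusion.

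I do not anticipate a serious obstacle: the argument is essentially two applications of the counit and antipode axioms, with the only subtlety being the choice of $m\circ(S\otimes\id)$ versus $m\circ(\id\otimes S)$ (dictated by which side the factor $y$ sits on inside $\Delta(y)$), together with the use of invertibility of $S$ to conclude from $S(y)=\I$ in the second case. The coideal reduction in the ``in particular'' clause is then purely formal.
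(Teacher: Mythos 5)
Your proof is correct, and it reaches the conclusion by a slightly different (more hands-on) route than the paper. The paper introduces the Galois-type map $\ww:\cH\otimes\cH\to\cH\otimes\cH$, $\ww(a\otimes b)=\Delta(a)(\I\otimes b)$, notes it is bijective with inverse $a\otimes b\mapsto\bigl((\id\otimes S)\Delta(a)\bigr)(\I\otimes b)$, rewrites the hypothesis as $\ww(y\otimes y)=\ww(y\otimes\I)$, and cancels to get $y\otimes y=y\otimes\I$, hence $y=\I$; the second identity is treated ``without loss of generality'' (implicitly via the analogous map built from $S^{-1}$). Your argument instead applies $\id\otimes\varepsilon$ to get $\varepsilon(y)=1$ and then $m\circ(S\otimes\id)$ (resp.\ $m\circ(\id\otimes S)$ together with injectivity of $S$) to force $y=\I$; this is the same antipode cancellation that powers the invertibility of $\ww$, just carried out directly in Sweedler notation, and it has the merit of making explicit exactly where the second case uses invertibility of $S$ (which the paper's standing assumption provides). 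Your reduction of the ``in particular'' clause is the same as the paper's; the only cosmetic omission is to note that a left (or right) unit $e$ of a non-zero subalgebra $\cA$ is itself non-zero (take $0\neq a\in\cA$ and use $ea=a$, resp.\ $ae=a$), which is needed before invoking the first part.
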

 \begin{proof}Without loss of generality we shall assume that \Cref{P_ident} holds. 
 
 Consider an operator $\ww:\cA\otimes\cA\to\cA\otimes\cA$ such that 
$\ww(a\otimes b) =\Delta(a)(\I\otimes b)$. Its inverse is given by $\ww^{-1}(a\otimes b) = (\id\otimes S)(\Delta(a))(\I\otimes b)$. Noting that  \Cref{P_ident} may be phrased as  $\ww(y\otimes y) = \ww(y\otimes \I)$ we get $y\otimes y = y\otimes\I$ and hence $y=\I$. 

If $\cA$ satisfies the assumptions of the lemma being proven then it has a non-zero element $y\in\cA$ satisfying \Cref{P_ident} or $(\I\otimes y)\Delta(y) = \Delta(y)$ and reasoning as in the previous paragraph we get $y = \I_\cH$.
 \end{proof}
The property defined in the next definition will be useful in the context of coideal subalegebras, see \Cref{rem_skr}. 
\begin{definition}\label{weakfin}
A unital algebra  $\cA$ is said to be  {\it weakly finite} if $xy = \I\implies yx=\I$ for every $x,y\in M_n(\cA)$ and $n\in\mathbb{N}$. 
\end{definition}
\begin{remark}\label{rem_skr}
  Using  \cite[Theorem 6.1]{skr-proj} we see that   if  $\cH$ is Hopf algebra which, us an algebra, is weakly finite and    $\cA\subset \cH$ is a finite dimensional left coideal subalgebra then 
  $\cA$ is Frobenius. Moreover if $\{0\}\neq \cV\subset \cA$ is a left (or right)  ideal in $\cA$ and incidentally it is a left (or right) coideal in $\cH$ then $\cV = \cA$.
\end{remark}

Every element $\Lambda\in\cH$ gives rise to  the smallest left coideal in $\cH$ containing $\Lambda$: \begin{equation}\label{thesmcoi}\cV_\Lambda=\{(\mu\otimes\id)(\Delta(\Lambda)):\mu\in\cH'\}. \end{equation} We shall also consider the smallest right coideal ${}_{\Lambda}{\cV}$ containing $\Lambda$: \[{}_{\Lambda}{\cV}=\{(\id\otimes\mu)(\Delta(\Lambda)):\mu\in\cH'\}.\]

\begin{remark}\label{lem_dim}
For every  $\Lambda\in\cH$ we have $\dim\cV_\Lambda = \dim{}_\Lambda{\cV} $ and it is equal to the rank of the tensor $\Delta(\Lambda)\in\cH\otimes\cH$.   
Noting that $\Delta(\Lambda)\subset {}_{\Lambda}{\cV}\otimes\cV_\Lambda$ we can define a linear bijection  $U:\cV_\Lambda'\to {}_{\Lambda}{\cV}$ by the formula $U(\nu) = (\id\otimes\nu)(\Delta(\Lambda))$.
 \end{remark}
 
The elements $\Lambda,\tilde\Lambda\in\cH$ introduced in  the next definition   turn out to link nicely  with the theory of $\mu$-integrals, see \Cref{f_example}. 
 \begin{definition}\label{def_glel}
A non-zero element $\Lambda\in\cH$ is said to be of integral type if \begin{equation}\label{int_type_def}\Delta(\Lambda)(\I\otimes\Lambda) =(\Lambda\otimes\Lambda)\end{equation} Let $\Lambda$ be an element of integral type. We say that a non-zero element $\tilde\Lambda\in\cH$ is of $\Lambda$-integral type if $\Delta(\tilde\Lambda)(\I\otimes\tilde\Lambda) = \Lambda\otimes\tilde\Lambda$. 
A $\Lambda$-integral element  $\tilde\Lambda$ is  said to be non-degenerate if $\I\in\cV_{\tilde\Lambda}$.
 \end{definition}
 
 \begin{remark}
 Let $\tilde\Lambda\in\cH$ be a non-zero element such that $\Delta(\tilde\Lambda)(\I\otimes\tilde\Lambda) = \Lambda\otimes\tilde\Lambda$ for some element $\Lambda\in\cH$. Then using the techniques of the proof of \cite[Lemma 2.2]{Chir_Kasp_Szu} we can prove that $\Lambda$ is of integral type. 
 
 The related concept of a right group-like projection $P\in\cH$ was recently studied in \cite{Chir_Kasp_Szu}. Let us recall that given $P\in\cH$ satisfying $P^2 = P$, $P\neq 0$ we say that it is
 \begin{itemize}
     \item  a right group-like projection if  
 \[\Delta(P)(\I\otimes P) = (\I\otimes P)\Delta(P) = P\otimes P;\]
  \item  a left  group-like projection if 
 \[\Delta(P)(P\otimes\I) = (P\otimes \I)\Delta(P) = P\otimes P.\]
 \end{itemize} 
  A projection which is incidentally a left and a right group-like projection is called two sided group-like projection. 
  
 If  $\Lambda\in\cH$  is a projection   of integral type   as defined in \Cref{def_glel} then, under certain extra conditions, it is a right group-like projection, c.f. \Cref{onetwo}. 
 \end{remark}
 \begin{remark}\label{rem_til_notil}
Given  an element $\tilde\Lambda$ of $\Lambda$-integral type we define \begin{equation}\label{defal}\cA_{\tilde\Lambda}=\{a\in\cH: \Delta(a)(\I\otimes\tilde\Lambda) = b\otimes\tilde\Lambda \textrm{ for some }b\in\cH\}.\end{equation} Let us note that $\cA_{\tilde\Lambda}$ is a left coideal subalgebra of $\cH$  containing $\cV_{\tilde\Lambda}$. Since  $\ww:\cH\otimes\cH \ni x\otimes y\mapsto \Delta(x)(\I\otimes y) \in\cH\otimes\cH$ is  bijective, the map $\pi:\cA_{\tilde\Lambda}\ni a\mapsto b\in\cH$ (where $a,b\in\cH$ are related as in  \Cref{defal}) is injective and it is easy to see that $\pi(aa') = \pi(a)\pi(a')$ and $\Delta(\pi(a)) = (\id\otimes\pi)(\Delta(a))$. In particular   $\pi(\tilde\Lambda) = \Lambda$,  
$ \pi(\cV_{\tilde\Lambda})=\cV_{\Lambda}$ and $ \pi(\cA_{\tilde\Lambda})=\cA_{\Lambda}
$, and
 $\tilde\Lambda$ is non-degenerate if and only if $\Lambda$ is non-degenerate. 
 
 Applying $(\varepsilon\otimes\id)$ to the identity $\Delta(a)(\I\otimes\tilde\Lambda) = \pi(a)\otimes\tilde\Lambda$ we get  \begin{equation}\label{integr}a\tilde\Lambda = \mu(a)\tilde\Lambda\end{equation} for every $a\in\cA_{\tilde\Lambda}$,
 where $\mu\in\cA_\Lambda'$ is a multiplicative functional given by $\mu(a):=\varepsilon(\pi(a))$.
 
Let us note that the equality  \begin{equation}\label{eq_inv}\Delta(a)(\I\otimes \tilde\Lambda) = \pi(a)\otimes \tilde\Lambda\end{equation} is equivalent to \[(\I\otimes a)\Delta(\tilde\Lambda) = (S(\pi(a))\otimes\I)\Delta(\tilde\Lambda).\] Indeed  \Cref{eq_inv} holds if and only if \begin{equation}\label{eq_inv1}S^{-1}(\tilde\Lambda_{(1)})S^{-1}(a_{(2)})a_{(1)}\otimes a_{(3)}\tilde\Lambda_{(2)} = S^{-1}(\tilde\Lambda_{(1)})\pi(a)\otimes \tilde\Lambda_{(2)} \end{equation} and we conclude the claimed equivalence by noting that $S^{-1}(\tilde\Lambda_{(1)})S^{-1}(a_{(2)})a_{(1)}\otimes a_{(3)}\tilde\Lambda_{(2)} = S^{-1}(\tilde\Lambda_{(1)})\otimes a\tilde\Lambda_{(2)}$ and then applying $(S\otimes\id)$ to both sides of \Cref{eq_inv1}.
\end{remark}
 The results stated in the next theorem  were obtained in \cite{Chir_Kasp_Szu} for right group-like projections; actually the same techniques works for all elements of $\Lambda$-integral type. Note that using the injectivity of $\pi$ entering \Cref{rem_til_notil}, it suffices to prove the next result  for elements of integral type. But for them all arguments provided in the proofs of \cite[Proposition 3.23, Proposition 3.24, Corollary 3.25]{Chir_Kasp_Szu} can be repeated verbatim.  
 \begin{theorem}\label{thm_mod}
 Let $\tilde\Lambda\in\cH$ be  an element of $\Lambda$-integral type, $\cA_{\tilde\Lambda}$ the left coideal subalgebra assigned to $\tilde\Lambda$ and $\pi:\cA_{\tilde\Lambda}\to\cH$ a homomorphism described in \Cref{rem_til_notil}. Then for every $a\in\cA_{\tilde\Lambda}$ we have \begin{equation}\label{eq_pi_til}(\I\otimes a)\Delta(\tilde\Lambda) = (S(\pi(a))\otimes \I)\Delta(\tilde\Lambda).\end{equation} Moreover
 \begin{itemize}
 \item $\cV_{\tilde\Lambda}\subset \cA_{\tilde\Lambda}$ and it is a left-$\cA_{\tilde\Lambda}$ ideal which is faithful when viewed as $\cA_{\tilde\Lambda}$-left module;
 \item  $\dim\cA_{\tilde\Lambda}<\infty$; 
 \item  ${}_{\tilde\Lambda}\cV$ is a right  $\cA_{\tilde\Lambda}$-module via $x\cdot a:=S(\pi(a))x$ for all $a\in\cA_{\tilde\Lambda}$ and $x\in{}_{\tilde\Lambda}\cV$;
 \item the map  $U:\cV_{\tilde\Lambda}'\to{}_{\tilde\Lambda}\cV$ entering the proof of \Cref{lem_dim}  is a right $\cA_{\tilde\Lambda}$-module isomorphism;
 \item $\tilde\Lambda$ is non-degenerate if and only if $\cV_{\tilde\Lambda} = \cA_{\tilde\Lambda}$.
 \end{itemize}
 \end{theorem}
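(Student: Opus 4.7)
The plan is to dispose of the displayed identity by invoking the equivalence already recorded in \Cref{rem_til_notil}, and then to reduce the bullet points to the case where $\tilde\Lambda = \Lambda$ is itself an element of integral type. The reduction is mediated by the injective algebra homomorphism $\pi:\cA_{\tilde\Lambda}\to\cA_\Lambda$ from \Cref{rem_til_notil}, which satisfies $\pi(\tilde\Lambda)=\Lambda$, $\pi(\cV_{\tilde\Lambda})=\cV_\Lambda$ and $\pi(\cA_{\tilde\Lambda})=\cA_\Lambda$; every structural statement about the triple $(\tilde\Lambda,\cV_{\tilde\Lambda},\cA_{\tilde\Lambda})$ therefore transports to one about $(\Lambda,\cV_\Lambda,\cA_\Lambda)$, and in the latter setting the arguments of \cite[Propositions 3.23, 3.24 and Corollary 3.25]{Chir_Kasp_Szu}, written for right group-like projections, transcribe verbatim because they use only the integral-type equation $\Delta(\Lambda)(\I\otimes\Lambda)=\Lambda\otimes\Lambda$.

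The individual bullets are then short Sweedler computations keyed to the identity $\tilde\Lambda_{(1)}\otimes a\tilde\Lambda_{(2)} = S(\pi(a))\tilde\Lambda_{(1)}\otimes\tilde\Lambda_{(2)}$. Contracting this with $\mu\otimes\id$ rewrites $a\cdot(\mu\otimes\id)\Delta(\tilde\Lambda)$ as an element of $\cV_{\tilde\Lambda}$, giving the left ideal property, while contracting with $\id\otimes\nu$ secures the right action $x\cdot a := S(\pi(a))x$ on ${}_{\tilde\Lambda}\cV$ and simultaneously yields $U(\nu\cdot a) = \tilde\Lambda_{(1)}\nu(a\tilde\Lambda_{(2)}) = S(\pi(a))U(\nu)$, so that the bijection $U$ of \Cref{lem_dim} is a right $\cA_{\tilde\Lambda}$-module isomorphism. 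The inclusion $\cV_{\tilde\Lambda}\subset\cA_{\tilde\Lambda}$ (with $\pi(\cV_{\tilde\Lambda})\subset\cV_\Lambda$) is obtained by applying $\Delta\otimes\id$ to the integral-type equation and contracting with $\mu\otimes\id\otimes\id$. Finite-dimensionality of $\cA_{\tilde\Lambda}$ then reduces to the faithful action on the finite-dimensional module $\cV_{\tilde\Lambda}$ of \Cref{lem_dim}. The non-degeneracy equivalence is purely formal: $\I\in\cV_{\tilde\Lambda}$ combined with the ideal property forces $\cA_{\tilde\Lambda}=\cA_{\tilde\Lambda}\cdot\I\subset\cV_{\tilde\Lambda}$, and the converse is trivial.

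The piece I expect to be genuinely technical is the faithfulness of $\cV_{\tilde\Lambda}$ as a left $\cA_{\tilde\Lambda}$-module, because this is what actually forces $\dim\cA_{\tilde\Lambda}<\infty$. Suppose $a\in\cA_{\tilde\Lambda}$ annihilates $\cV_{\tilde\Lambda}$; then $(\I\otimes a)\Delta(\tilde\Lambda)=0$, and the displayed identity gives $(S(\pi(a))\otimes\I)\Delta(\tilde\Lambda)=0$, so $S(\pi(a))$ annihilates the right coideal ${}_{\tilde\Lambda}\cV$. Concluding $\pi(a)=0$ (whence $a=0$ by injectivity of $\pi$) is immediate when $\varepsilon(\tilde\Lambda)\neq 0$, by contracting with $\id\otimes S$ and multiplying to obtain $\varepsilon(\tilde\Lambda)S(\pi(a))=0$, but in the general integral-type case (for instance when $\tilde\Lambda^2=0$) the conclusion has to be extracted via the right module structure on ${}_{\tilde\Lambda}\cV$ as in \cite[Proposition 3.23]{Chir_Kasp_Szu}; this is the only ingredient that is not a direct contraction of the key identity.
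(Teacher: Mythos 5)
Your proposal is correct and takes essentially the same route as the paper: the displayed identity is the equivalence already recorded in \Cref{rem_til_notil}, and the remaining assertions are reduced via the injective homomorphism $\pi$ to the integral-type case, where the arguments of \cite[Propositions 3.23, 3.24 and Corollary 3.25]{Chir_Kasp_Szu} are repeated verbatim---which is exactly the paper's (citation-style) proof. The explicit Sweedler contractions you supply for the individual bullets (left-ideal property, the right action on ${}_{\tilde\Lambda}\cV$, $U$ as a module map, finite dimensionality via the faithful finite-dimensional module, and the non-degeneracy equivalence), as well as your deferral of the faithfulness step to the cited reference, are consistent with and merely flesh out that argument.
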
 
 Using \Cref{thm_mod} and \Cref{unitality} in the first part of the next corollary and \Cref{rem_skr}   in the the second part  we conclude. 
 \begin{corollary}\label{cor_nondeg}A $\Lambda$-integral element $\tilde\Lambda$ is non-degenerate if and only if $\cV_{\tilde\Lambda}$ is a right or a left unital algebra and the latter is equivalent with $\cV_{\tilde\Lambda} = \cA_{\tilde\Lambda}$. 
 
 Every $\Lambda$-integral  element $\tilde\Lambda$ in a weakly finite Hopf algebra is non-degenerate.
 \end{corollary}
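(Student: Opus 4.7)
The plan is to derive the corollary directly from \Cref{thm_mod}, \Cref{unitality}, and \Cref{rem_skr} without any fresh computation. For the first statement I would establish the chain of equivalences (a) $\tilde\Lambda$ is non-degenerate $\Leftrightarrow$ (b) $\cV_{\tilde\Lambda}$ is a subalgebra admitting a left or right unit $\Leftrightarrow$ (c) $\cV_{\tilde\Lambda}=\cA_{\tilde\Lambda}$. The equivalence (a) $\Leftrightarrow$ (c) is already recorded as the last bullet of \Cref{thm_mod}, so only the middle equivalence requires comment. The implication (c) $\Rightarrow$ (b) is immediate since $\cA_{\tilde\Lambda}$ is by construction a unital left coideal subalgebra, so identifying it with $\cV_{\tilde\Lambda}$ endows the latter with both a left and a right unit.

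For (b) $\Rightarrow$ (a) I would invoke \Cref{unitality}. One first needs that $\cV_{\tilde\Lambda}$ is a non-zero left coideal of $\cH$: non-zeroness follows because $\tilde\Lambda = (\eps\otimes\id)\Delta(\tilde\Lambda) \in \cV_{\tilde\Lambda}$, and the left coideal property is a one-line consequence of applying coassociativity to the defining formula \Cref{thesmcoi}. Given that $\cV_{\tilde\Lambda}$ is additionally a subalgebra carrying a one-sided unit, \Cref{unitality} then forces $\I_\cH\in\cV_{\tilde\Lambda}$, which by \Cref{def_glel} is precisely non-degeneracy of $\tilde\Lambda$.

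For the second statement, I would combine the first two bullets of \Cref{thm_mod} --- which provide that $\cA_{\tilde\Lambda}$ is a finite dimensional left coideal subalgebra and that $\cV_{\tilde\Lambda}$ is a non-zero left ideal in it --- with \Cref{rem_skr}: in a weakly finite Hopf algebra any non-zero left ideal of $\cA_{\tilde\Lambda}$ that also happens to be a left coideal of $\cH$ must coincide with $\cA_{\tilde\Lambda}$. Since $\cV_{\tilde\Lambda}$ is tautologically such a left coideal, this yields $\cV_{\tilde\Lambda}=\cA_{\tilde\Lambda}$ and hence non-degeneracy via the first part. No serious obstacle is expected; the only genuine bookkeeping point is to confirm the left coideal status of $\cV_{\tilde\Lambda}$ before invoking \Cref{unitality} or \Cref{rem_skr}, which as noted is just coassociativity applied to \Cref{thesmcoi}.
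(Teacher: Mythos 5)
Your proposal is correct and follows exactly the route the paper indicates: the last bullet of \Cref{thm_mod} for the equivalence with $\cV_{\tilde\Lambda}=\cA_{\tilde\Lambda}$, \Cref{unitality} (applied to the non-zero left coideal subalgebra $\cV_{\tilde\Lambda}$ with a one-sided unit) for the first part, and \Cref{rem_skr} combined with the first two bullets of \Cref{thm_mod} for the weakly finite case. The extra bookkeeping you supply (non-zeroness of $\cV_{\tilde\Lambda}$ and its left coideal property via coassociativity) is exactly what the paper leaves implicit, so there is nothing to correct.
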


 \begin{problem}
 Is there a Hopf algebra $\cH$ with a degenerate  integral element $\Lambda\in\cH$?
 \end{problem}
 Before formulating the next result let us note  that $\cV_{\tilde\Lambda}\subset \cA_{\tilde\Lambda}$ is a subalgebra; in particular ${}_{\tilde\Lambda}\cV$ can be viewed as $\cV_{\tilde\Lambda}$-module (see the second bullet point of \Cref{thm_mod}). 
 \begin{proposition}\label{cor_frob}
Suppose that ${\tilde\Lambda}$ is a   $\Lambda$-integral element. Then  ${}_{\tilde\Lambda}\cV$ is a cyclic $\cV_{\tilde\Lambda}$-module if and only if  $\I_{\cH}\in \cV_{\tilde\Lambda}$ and $\cV_{\tilde\Lambda}$ is a Frobenius algebra. 
 \end{proposition}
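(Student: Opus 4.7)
The plan is to transport cyclicity from ${}_{\tilde\Lambda}\cV$ to the dual $\cV_{\tilde\Lambda}'$ via the right-module isomorphism supplied by \Cref{thm_mod}, and then invoke the Frobenius characterisation in \Cref{thm_frob_cond}. Concretely, \Cref{thm_mod} provides a right $\cA_{\tilde\Lambda}$-module isomorphism
\[U\colon \cV_{\tilde\Lambda}'\longrightarrow {}_{\tilde\Lambda}\cV,\qquad U(\nu)=(\id\otimes\nu)(\Delta(\tilde\Lambda)),\]
and since $\cV_{\tilde\Lambda}\subset\cA_{\tilde\Lambda}$ is a subalgebra, restricting the action makes $U$ an isomorphism of right $\cV_{\tilde\Lambda}$-modules. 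Consequently ${}_{\tilde\Lambda}\cV$ is a cyclic right $\cV_{\tilde\Lambda}$-module if and only if $\cV_{\tilde\Lambda}'$ admits a cyclic vector, and the whole proof reduces to relating the latter condition with the conjunction ``$\I_\cH\in\cV_{\tilde\Lambda}$ and $\cV_{\tilde\Lambda}$ is Frobenius''.

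For the implication $(\Leftarrow)$, assuming $\I_\cH\in\cV_{\tilde\Lambda}$ and that $\cV_{\tilde\Lambda}$ is Frobenius, \Cref{thm_frob_cond}(4) furnishes a cyclic vector in $\cV_{\tilde\Lambda}'$, which $U$ sends to a cyclic element of ${}_{\tilde\Lambda}\cV$.

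For $(\Rightarrow)$, from a cyclic element of ${}_{\tilde\Lambda}\cV$ I would pull back along $U^{-1}$ to get a cyclic vector $\omega\in\cV_{\tilde\Lambda}'$. Because $\cV_{\tilde\Lambda}$ is finite dimensional (it lies in $\cA_{\tilde\Lambda}$, which is finite dimensional by \Cref{thm_mod}), the surjection $\cV_{\tilde\Lambda}\ni b\mapsto \omega\cdot b\in\cV_{\tilde\Lambda}'$ is bijective, so $\omega$ is faithful in the sense of the paper (compare \Cref{rem_f}). Applying \Cref{unital} then forces $\cV_{\tilde\Lambda}$ to be unital. Since $\cV_{\tilde\Lambda}$ is a non-zero subalgebra of $\cH$ with $\Delta(\cV_{\tilde\Lambda})\subset\cH\otimes\cV_{\tilde\Lambda}$ possessing a unit, the second assertion of \Cref{unitality} upgrades this unit to $\I_\cH$, giving $\I_\cH\in\cV_{\tilde\Lambda}$. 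With $\cV_{\tilde\Lambda}$ now verified to be unital, the implication (4)$\Rightarrow$(1) of \Cref{thm_frob_cond} identifies $\cV_{\tilde\Lambda}$ as a Frobenius algebra.

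The only subtle point is that \Cref{thm_frob_cond} presupposes the algebra under consideration is unital, so the passage from ``$\cV_{\tilde\Lambda}'$ has a cyclic vector'' to ``$\cV_{\tilde\Lambda}$ is Frobenius'' is not immediate: it is precisely the combined use of \Cref{unital} (a faithful functional forces a unit on a finite dimensional algebra) and \Cref{unitality} (any unit of a left coideal subalgebra of $\cH$ must be $\I_\cH$) that provides the bridge, and this is where all the input of the coideal structure of $\cV_{\tilde\Lambda}$ enters.
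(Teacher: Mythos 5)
Your proposal is correct and follows essentially the same route as the paper's proof: transport cyclicity through the right-module isomorphism $U\colon\cV_{\tilde\Lambda}'\to{}_{\tilde\Lambda}\cV$ from \Cref{thm_mod}, use \Cref{unital} to get a unit from the resulting faithful functional, upgrade it to $\I_\cH$ via \Cref{unitality}, and conclude with \Cref{thm_frob_cond}; the converse is the same application of \Cref{thm_frob_cond} and the fourth bullet of \Cref{thm_mod}. Your explicit handling of the unitality subtlety is exactly the implicit content of the paper's argument.
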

 \begin{proof}
 Suppose that ${}_{\tilde\Lambda}\cV$ is a cyclic $\cV_{\tilde\Lambda}$-module. Due to \Cref{unital} and the fourth bullet point of \Cref{thm_mod} we see that $\cV_{\tilde\Lambda}$ is a unital algebra. Using \Cref{unitality} we see $\I_\cH\in\cV_{\tilde\Lambda}$. Using \Cref{thm_frob_cond} we  conclude that $\cV_{\tilde\Lambda}$ is a Frobenius algebra. Conversely, if  $\cV_{\tilde\Lambda}$ is a Frobenius algebra then ${}_{\tilde\Lambda}\cV$ is a cyclic due to \Cref{thm_frob_cond} and the fourth bullet point of \Cref{thm_mod}.
 \end{proof}
\section{Generalized integrals for left coideal subalgebras.}\label{sec_muint}
\begin{definition}
Let $\cA$ be an algebra and $\mu\in\cA'$ a non-zero multiplicative functional on $\cA$. We define 
\begin{align*}L^\cA_\mu&=\{\Lambda\in\cA:a\Lambda= \mu(a)\Lambda\textrm{ for all } a\in\cA\},\\
R^\cA_\mu&=\{\Lambda\in\cA:\Lambda a= \mu(a)\Lambda\textrm{ for all } a\in\cA\}.\end{align*}
 The elements of   $L^\cA_\mu$ ($R^\cA_\mu$) are called left (resp. right) $\mu$-integrals. 
\end{definition}
The proof of the next lemma is the direct consequence of \Cref{def_lr} and \Cref{biann}.
\begin{lemma}\label{bilem}
Let $\cA$ be an algebra and $\mu\in\cA'$ a non-zero multiplicative functional on $\cA$. Then $L^\cA_\mu = r(\ker\mu)$ and $R^\cA_\mu = l(\ker\mu)$. In particular if $\cA$ satisfies the biannihilator condition then $\dim L^\cA_\mu >0$ and $\dim R^\cA_\mu >0$. More specifically the latter holds if $\cA$ satisfies the QF property. 
\end{lemma}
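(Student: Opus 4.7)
The plan is to directly unfold the definitions and then invoke unitality to get the ``converse'' inclusions; the biannihilator clause is then an immediate consequence.

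First I would verify that $\mu(\I)=1$: since $\mu$ is multiplicative, $\mu(\I)=\mu(\I)^2$, so $\mu(\I)\in\{0,1\}$; and $\mu(\I)=0$ would force $\mu(a)=\mu(\I)\mu(a)=0$ for every $a\in\cA$, contradicting $\mu\neq 0$. In particular, $\ker\mu$ is a two-sided ideal of codimension one (it is a left/right ideal because $\mu$ is multiplicative, and it misses $\I$).

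Next I would prove the identity $L^\cA_\mu = r(\ker\mu)$. The forward inclusion is immediate: if $\Lambda\in L^\cA_\mu$ and $b\in\ker\mu$ then $b\Lambda=\mu(b)\Lambda=0$. For the reverse inclusion, given $\Lambda\in r(\ker\mu)$ and an arbitrary $a\in\cA$, the element $a-\mu(a)\I$ lies in $\ker\mu$ (since $\mu(\I)=1$), so
\[0 = (a-\mu(a)\I)\Lambda = a\Lambda - \mu(a)\Lambda,\]
which gives $a\Lambda = \mu(a)\Lambda$ and hence $\Lambda\in L^\cA_\mu$. The identity $R^\cA_\mu = l(\ker\mu)$ follows by the symmetric argument.

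For the second assertion, assume $\cA$ satisfies the biannihilator condition. Since $\ker\mu$ is a proper left ideal (it has codimension one), $l(r(\ker\mu))=\ker\mu\neq \cA$, which forces $r(\ker\mu)\neq\{0\}$, i.e.\ $\dim L^\cA_\mu>0$. Analogously $r(l(\ker\mu))=\ker\mu\neq\cA$ forces $\dim R^\cA_\mu>0$. The final sentence is then immediate from the remark (attributed to \cite[Proposition 3.33]{Chir_Kasp_Szu}) that the QF property implies the biannihilator condition. There is no real obstacle here; the only mild point is the standard normalization $\mu(\I)=1$, which is what allows us to promote annihilation by $\ker\mu$ to the full eigenvalue equation $a\Lambda=\mu(a)\Lambda$.
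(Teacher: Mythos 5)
Your proof is correct and is essentially the paper's argument: the paper omits the proof entirely, calling the lemma a direct consequence of the definitions of the annihilators and the biannihilator condition, and your unfolding of those definitions (forward inclusion trivially, reverse inclusion via $a-\mu(a)\I\in\ker\mu$, then $r(\ker\mu)\neq\{0\}$ from $l(r(\ker\mu))=\ker\mu\neq\cA$) is exactly that. The one point worth flagging is that your reverse inclusion uses the unit $\I$ (and the normalization $\mu(\I)=1$); this is implicit in the paper, where the lemma is applied to unital algebras (coideal subalgebras, Frobenius/QF algebras), and indeed for a non-unital algebra the equality $L^\cA_\mu=r(\ker\mu)$ can fail, so your implicit unitality assumption is the right reading of the statement.
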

\begin{example}\label{f_example}
Let $\cH$ be a Hopf algebra and let  $\tilde\Lambda\in\cH$, $\cA_{\tilde\Lambda}$ and  $\mu\in\cA_{\tilde\Lambda}'$ be the $\Lambda$-integral element, left coideal subalgebra and multiplicative functional respectively, as defined in \Cref{rem_til_notil}. Equation \Cref{integr} means that $\tilde\Lambda\in L_\mu^{\cA_{\tilde\Lambda}}$. 
\end{example}
\begin{remark}\label{rem_semi1}
Note that  for every $a\in\cA$ and $\Lambda\in L_\mu^\cA$  we   have $a\Lambda\in L_\mu^\cA$. Moreover,  given $b\in\cA$ we have $a\Lambda b = \mu(a)\Lambda b$ for all  $a\in\cA$ and we see that $L_\mu^\cA$ is a two sided ideal in $\cA$. Similarly we check that $R_\mu^\cA$ is a two sided ideal.

Suppose that $\cA$ is semisimple. Then since $\ker\mu\subset \cA$ is a two sided ideal, there is a unique central projection $Q\in\cA$ such that $Q\cA = \ker\mu$. Putting $P = \I-Q$ we have $P\in L_\mu^\cA\cap R_\mu^\cA$. It is easy to check that $L^\cA_\mu = R^\cA_\mu = \kk P$. 
\end{remark}
\begin{theorem}\label{thm_dim}
Let $(\cA,\sigma)$ be a Frobenius algebra and $\mu\in\cA'$ a non-zero multiplicative functional. Then $\dim L^\cA_\mu  = \dim R^\cA_\mu = 1$. Moreover every $1$-dimensional one sided  sided ideal $L \subset \cA$ is two sided and it is of the form $L_\mu^\cA$ for a unique  multiplicative functional $\mu$ on $\cA$. 
\end{theorem}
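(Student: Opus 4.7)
The plan is to reduce both claims to a dimension count against the Frobenius form and then extract the moreover part via a short direct argument. Via \Cref{thm_frob_cond} and \Cref{rem_f} I would fix the faithful functional $\omega\in\cA'$ with $\sigma(a,b)=\omega(ab)$, and then prove, for every left ideal $I\subset\cA$, the identification
\[
r(I) \;=\; I^{\perp} \;:=\; \{a\in\cA : \omega(Ia)=0\},
\]
together with its symmetric counterpart $l(J)={}^{\perp} J:=\{a\in\cA:\omega(aJ)=0\}$ for every right ideal $J$. The inclusion $r(I)\subset I^\perp$ is immediate, while for the reverse inclusion one picks $a\in I^\perp$ and $x\in I$; then $bx\in I$ gives $\omega(b(xa))=\omega((bx)a)=0$ for every $b\in\cA$, and faithfulness of $\omega$ forces $xa=0$, hence $Ia=0$. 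Non-degeneracy of $\sigma$ then supplies the dimension identity $\dim I + \dim r(I) = \dim\cA = \dim J + \dim l(J)$. Applied to $I=J=\ker\mu$, which is a two-sided ideal of codimension one since $\mu$ is a non-zero algebra homomorphism, and combined with \Cref{bilem}, this yields $\dim L^\cA_\mu = \dim R^\cA_\mu = 1$.

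For the moreover part I would let $L\subset\cA$ be a one-dimensional left ideal, write $L=\kk\Lambda$ with $\Lambda\neq 0$, and define $\mu(a)\in\kk$ by the relation $a\Lambda=\mu(a)\Lambda$. Multiplicativity of $\mu$ and $\mu\neq 0$ are immediate from $(ab)\Lambda=a(b\Lambda)$ and $\I\Lambda=\Lambda$, so $L\subset L^\cA_\mu$; the dimension count then forces $L=L^\cA_\mu$, and uniqueness of $\mu$ is clear since it is determined by its action on $\Lambda$. Two-sidedness follows from the identity $b(\Lambda a)=(b\Lambda)a=\mu(b)(\Lambda a)$, which places $\Lambda a$ in $L^\cA_\mu=L$ for every $a\in\cA$. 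A parallel argument with $R^\cA_\nu$ disposes of one-dimensional right ideals, which turn out to be two-sided and hence coincide with some $L^\cA_\mu$ by the case already handled.

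The one delicate step is the identification $r(I)=I^\perp$: the left ideal property of $I$ and the faithfulness of $\omega$ must be applied in just the right order to pass from $\omega(Ia)=0$ to $Ia=0$. Everything else reduces to routine linear algebra and direct verification.
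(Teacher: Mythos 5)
Your argument is correct, but it reaches the dimension count by a different mechanism than the paper. You prove the annihilator identity $r(I)=\{a\in\cA:\omega(Ia)=0\}$ for left ideals (and its mirror for right ideals) and deduce $\dim I+\dim r(I)=\dim\cA$ from non-degeneracy of $\sigma$, then specialize to the codimension-one ideal $\ker\mu$ and invoke \Cref{bilem}; in effect you are re-proving that a Frobenius algebra satisfies the biannihilator-type dimension property of \Cref{biann} and harvesting the special case. The paper instead works directly with the $\cA$-module isomorphism $\iota:\cA\ni x\mapsto x\cdot\omega_\sigma\in\cA'$ of \Cref{rem_f} and \Cref{thm_frob_cond}, showing that $\Lambda\in L^\cA_\mu$ exactly when $\Lambda\cdot\omega_\sigma=\omega_\sigma(\Lambda)\mu$, so that $\iota(L^\cA_\mu)=\kk\mu$ and the dimension is read off at once. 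Both proofs hinge on the same non-degenerate pairing and on the two-sided faithfulness of $\omega_\sigma$ (your passage from $\omega(b(xa))=0$ for all $b$ to $xa=0$ uses the equivalent ``other-sided'' form noted after the definition of faithfulness, and is fine); your route buys the general annihilator dimension formula, which is reusable elsewhere (compare \Cref{biann_non}), while the paper's is a shorter direct identification. For the ``moreover'' part the two arguments essentially coincide: you define $\mu$ from the one-dimensional ideal, use $\dim L^\cA_\mu=1$ to get $L=L^\cA_\mu$, and your explicit verification of two-sidedness and of the right-ideal case spells out what the paper leaves to \Cref{rem_semi1} and the phrase ``say left ideal.'' No gaps.
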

\begin{proof}
In the proof we shall use the notation introduced in \Cref{rem_f}.  

Note that  $\Lambda\in L_\mu^\cA$
if and only if $\Lambda\cdot\omega_\sigma = \omega_\sigma(\Lambda)\mu$. Indeed, applying $\omega_\sigma$ to  $a\Lambda = \mu(a)\Lambda$ we get the left implication. Conversely, if $\Lambda\cdot\omega_\sigma = \omega_\sigma(\Lambda)\mu$ then for all $b\in\cA$ we have
\begin{align*}\omega_\sigma(ba\Lambda)&=(\Lambda\cdot\omega_\sigma)(ba)\\&=
\omega_\sigma(\Lambda)\mu(ab)\\&=
\omega_\sigma(\Lambda)\mu(\mu(a)b)\\
&=\omega_\sigma(b\mu(a)\Lambda)
\end{align*} and hence $a\Lambda = \mu(a)\Lambda$. 
Let $\iota:\cA\to\cA'$ be the bijection given by $\cA\ni x\mapsto x\cdot\omega_\sigma\in\cA'$. The above reasoning shows that $\iota(L_\mu^\cA) = \kk\mu$ and hence $\dim L_\mu^\cA =1$. Similarly we prove that $\dim R_\mu^\cA = 1$.

If  $L\subset \cA$ is  a $1$-dimensional, say left  ideal, then there exists a multiplicative functional  $\mu\in\cA\to\kk$ such that $a\Lambda = \mu(a)\Lambda$ for every $a\in\cA$ and $\Lambda\in L$ and hence $L = L_\mu^\cA$.
\end{proof}
Using  \cite[Theorem 6.1]{skr-proj} (see \Cref{rem_skr}) we get:
\begin{corollary}\label{weak_f}
Let $\cH$ be a weakly finite Hopf algebra and $\cA$  a finite dimensional left coideal subalgebra of $\cH$ and let $\mu\in\cA'$ be a  non-zero multipilicative functional. Then $\dim L^\cA_\mu = 1 = \dim R^\cA_\mu$. 
\end{corollary}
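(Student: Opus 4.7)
The plan is essentially a direct reduction to the two results cited just above the statement. The hypothesis that $\cH$ is weakly finite and $\cA\subset\cH$ is a finite dimensional left coideal subalgebra triggers \Cref{rem_skr}, which is precisely an invocation of \cite[Theorem 6.1]{skr-proj}: under these hypotheses $\cA$ is automatically a Frobenius algebra.

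Once Frobeniusness of $\cA$ is in hand, the existence of a non-zero multiplicative functional $\mu\in\cA'$ lets us apply \Cref{thm_dim} verbatim: for a Frobenius algebra equipped with a non-zero multiplicative $\mu$, we have $\dim L_\mu^\cA=\dim R_\mu^\cA=1$. So the corollary follows by chaining these two facts together, with no further calculation required.

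There is no real obstacle here; the proof is a one-liner. The only thing to take care of is stating the logical chain cleanly: first cite \Cref{rem_skr} (equivalently \cite[Theorem 6.1]{skr-proj}) to promote $\cA$ from a finite dimensional coideal subalgebra of a weakly finite Hopf algebra to a Frobenius algebra, and then cite \Cref{thm_dim} applied to $(\cA,\sigma)$ with the given $\mu$ to conclude the dimension equalities. No unpacking of the Frobenius form $\sigma$, no manipulation of the annihilators via \Cref{bilem}, and no appeal to \Cref{thm_frob_cond} is needed, since all that work is already absorbed into \Cref{thm_dim}.
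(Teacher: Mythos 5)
Your proposal is correct and coincides with the paper's own argument: the paper derives the corollary exactly by combining \cite[Theorem 6.1]{skr-proj} (as recorded in \cref{rem_skr}), which makes $\cA$ Frobenius, with \cref{thm_dim}. Nothing further is needed.
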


\begin{theorem}\label{thm_fd}
Let $\cA\subset \cH$ be a left  coideal subalgebra and $\mu\in\cA'$  a multiplicative functional on $\cA$. If $\dim L^\cA_\mu>0$ or $\dim R^\cA_\mu>0$ then $\dim\cA<\infty$. 
\end{theorem}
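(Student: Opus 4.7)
The plan is to recognize any nonzero left $\mu$-integral on $\cA$ as a $\Lambda'$-integral element in the sense of \Cref{def_glel}, and then invoke the finite-dimensionality statement of \Cref{thm_mod}. The right case will follow from the left one by passage to the opposite Hopf algebra $\cH^{\op}$: its comultiplication is unchanged, so $\cA$ remains a left coideal subalgebra (and is closed under the opposite multiplication since it was closed under the original one), and any element $\Lambda\in R^\cA_\mu$ in $\cH$ becomes an element of $L^\cA_\mu$ for $\cA\subset\cH^{\op}$ with the same multiplicative functional $\mu$. Thus I focus on the left case.

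Fix $0\neq\Lambda\in L^\cA_\mu$. Since $\cA$ is a left coideal, $\Delta(\Lambda)=\Lambda_{(1)}\otimes\Lambda_{(2)}$ with $\Lambda_{(2)}\in\cA$; applying the $\mu$-integral relation $\Lambda_{(2)}\Lambda=\mu(\Lambda_{(2)})\Lambda$ in the second tensor factor gives
\[
\Delta(\Lambda)(\I\otimes\Lambda)=\mu(\Lambda_{(2)})\Lambda_{(1)}\otimes\Lambda=\Lambda'\otimes\Lambda,
\]
where $\Lambda':=(\id\otimes\mu)\Delta(\Lambda)\in\cH$. If $\Lambda'=0$, the injectivity of the operator $\ww\colon a\otimes b\mapsto\Delta(a)(\I\otimes b)$ on $\cH\otimes\cH$ (established in the proof of \Cref{unitality}) forces $\Lambda=0$, a contradiction. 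Hence $\Lambda'\neq 0$, and the remark immediately following \Cref{def_glel} then guarantees that $\Lambda'$ is of integral type, so $\Lambda$ is of $\Lambda'$-integral type. \Cref{thm_mod} yields $\dim\cA_\Lambda<\infty$.

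It remains to check $\cA\subset\cA_\Lambda$. For any $a\in\cA$, write $\Delta(a)=a_{(1)}\otimes a_{(2)}$ with $a_{(2)}\in\cA$; the same computation produces
\[
\Delta(a)(\I\otimes\Lambda)=\mu(a_{(2)})a_{(1)}\otimes\Lambda=\pi(a)\otimes\Lambda,
\]
with $\pi(a):=(\id\otimes\mu)\Delta(a)\in\cH$, whence $a\in\cA_\Lambda$ by the definition in \Cref{defal}. Therefore $\dim\cA\le\dim\cA_\Lambda<\infty$.

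The one substantive step is the identification of a $\mu$-integral $\Lambda$ as a $\Lambda'$-integral element via $\Lambda'=(\id\otimes\mu)\Delta(\Lambda)$, which lets me tap into the already-proved finite-dimensionality output of \Cref{thm_mod}. The nonvanishing of $\Lambda'$, the inclusion $\cA\subset\cA_\Lambda$, and the $\cH^{\op}$-reduction that handles the right case are all routine.
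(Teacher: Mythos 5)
Your proof is correct and follows essentially the same route as the paper: recognize a nonzero $\mu$-integral $\tilde\Lambda$ as a $\Lambda$-integral element in the sense of \Cref{def_glel} with $\Lambda=(\id\otimes\mu)\Delta(\tilde\Lambda)$, observe $\cA\subset\cA_{\tilde\Lambda}$, and invoke the finite dimensionality of $\cA_{\tilde\Lambda}$ from \Cref{thm_mod}. The details you spell out (the nonvanishing of $\Lambda$ via injectivity of $\ww$, the inclusion $\cA\subset\cA_{\tilde\Lambda}$, and the reduction of the right case to the left case through $\cH^{\op}$) are exactly the points the paper's terse proof leaves implicit.
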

\begin{proof} We proceed assuming that $\dim L^\cA_\mu>0$. 
Let $\tilde\Lambda\in L^\cA_\mu$ be a non-zero element and $\cA_{\tilde\Lambda}$ the left coideal subalgebra introduced in \Cref{rem_til_notil}. Noting that  $\cA\subset \cA_{\tilde\Lambda}$ and using the second bullet point of \Cref{thm_mod}  we conclude that $\dim\cA<\infty$.  
\end{proof}

 \begin{problem}
 Is there a finite dimensional left coideal subalgebra $\cA\subset \cH$ equipped with a non-zero multiplicative functional  such that $L^\cA_\mu = 0$  or $R^\cA_\mu = 0$?
 
  Is there a finite dimensional left coideal subalgebra $\cA\subset \cH$ equipped with a non-zero multiplicative functional  such that $\dim L^\cA_\mu >1$  or $\dim R^\cA_\mu >1$?
  
  Can we have $\dim L^\cA_\mu  \neq \dim R^\cA_\mu  $?
 \end{problem}
\begin{definition}\label{non-deg-L}
  Let $\cA$ be a left coideal subalgebra of Hopf algebra $\cH$ and $\mu\in\cA'$ a non-zero multiplicative functional. We say that $\tilde\Lambda\in L_\mu^\cA$ is non-degenerate if $\cA = \cV_{\tilde\Lambda}$. 
\end{definition}
\begin{remark}\label{lamint}Let $\cA\in\cH$ be a finite dimensional left coideal subalgebra.
If  $0\neq \tilde\Lambda\in L^\cA_\mu$ then 
\[\Delta(\tilde\Lambda)(\I\otimes\tilde\Lambda) = \Lambda\otimes\tilde\Lambda\] where $\Lambda =  (\id\otimes\mu)\circ\Delta(\tilde\Lambda)$. In particular $\tilde\Lambda$ is a $\Lambda$-integral element. In what follows   the map $(\id\otimes\mu)\circ\Delta :\cA\to \cH$ will be denoted $\pi_\mu$. Reasoning as in \Cref{rem_til_notil} we see that $\pi_\mu$ is injective if $L^\cA_\mu\neq \{0\}$. 
\end{remark}In the next proposition we shall use the notation of \Cref{rem_til_notil} and \Cref{lamint}.
\begin{proposition}\label{non_deg}
 Let $\cA$ be a left coideal subalgebra of a Hopf algebra $\cH$ and $\mu\in\cA'$ a non-zero multiplicative functional.  The following conditions are equivalent:
 \begin{itemize}
     \item $\tilde\Lambda\in L_\mu^\cA$ is non-degenerate in the sense of \Cref{non-deg-L};
     \item $\tilde\Lambda$ is a non-degenerate $\Lambda$-integral element in the sense of \Cref{def_glel};
     \item we have $\cV_{\tilde\Lambda} = \cA_{\tilde\Lambda}$.
 \end{itemize}
 In particular if $\cH$ is weakly finite then every non-zero $\Lambda\in L_\mu^\cA$ is non-degenerate.
\end{proposition}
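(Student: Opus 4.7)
The plan is to prove the three-way equivalence by sandwiching $\cA$ between $\cV_{\tilde\Lambda}$ and $\cA_{\tilde\Lambda}$, after which the equivalences reduce to results already established in the preliminaries (notably the fifth bullet of \Cref{thm_mod} and \Cref{cor_nondeg}).

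First I would verify the inclusion $\cV_{\tilde\Lambda}\subset\cA$. Since $\tilde\Lambda\in\cA$ and $\cA$ is a left coideal, $\Delta(\tilde\Lambda)\in\cH\otimes\cA$; applying arbitrary functionals from $\cH'$ to the first leg, as in the defining formula \Cref{thesmcoi}, keeps the result in $\cA$. Next I would verify $\cA\subset\cA_{\tilde\Lambda}$. Take $a\in\cA$. Because $\Delta(a)\in\cH\otimes\cA$, each second-tensor factor lies in $\cA$ and absorbs $\tilde\Lambda$ through $\mu$; using Sweedler notation, $\Delta(a)(\I\otimes\tilde\Lambda) = a_{(1)}\otimes a_{(2)}\tilde\Lambda = a_{(1)}\mu(a_{(2)})\otimes\tilde\Lambda = \pi_\mu(a)\otimes\tilde\Lambda$, with $\pi_\mu(a)\in\cH$, so $a$ fulfils the defining condition \Cref{defal} of $\cA_{\tilde\Lambda}$.

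With the chain $\cV_{\tilde\Lambda}\subset\cA\subset\cA_{\tilde\Lambda}$ in hand, the equality $\cA = \cV_{\tilde\Lambda}$ (first bullet) forces $\cA_{\tilde\Lambda}=\cV_{\tilde\Lambda}$ (third bullet), and conversely if $\cV_{\tilde\Lambda}=\cA_{\tilde\Lambda}$ then the squeeze collapses the inclusions and gives $\cA=\cV_{\tilde\Lambda}$. This settles (i)$\Leftrightarrow$(iii). The equivalence (ii)$\Leftrightarrow$(iii) is exactly the content of the fifth bullet of \Cref{thm_mod}, noting by \Cref{lamint} that $\tilde\Lambda$ is a $\Lambda$-integral element with $\Lambda=\pi_\mu(\tilde\Lambda)$ and hence \Cref{thm_mod} applies.

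For the closing assertion, \Cref{cor_nondeg} states that in a weakly finite Hopf algebra every $\Lambda$-integral element is non-degenerate in the sense of \Cref{def_glel}, i.e.\ satisfies condition (ii). By the equivalence (ii)$\Leftrightarrow$(i) just proved, any non-zero $\Lambda\in L_\mu^\cA$ is then automatically non-degenerate in the sense of \Cref{non-deg-L}. I do not anticipate a genuine obstacle here: the work is really bookkeeping that translates known facts about $\Lambda$-integral elements in $\cH$ into statements about the coideal $\cA$ once the two inclusions are in place.
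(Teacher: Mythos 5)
Your setup is sound: the two inclusions $\cV_{\tilde\Lambda}\subset\cA$ and $\cA\subset\cA_{\tilde\Lambda}$ are verified correctly (the Sweedler computation $\Delta(a)(\I\otimes\tilde\Lambda)=a_{(1)}\mu(a_{(2)})\otimes\tilde\Lambda$ is exactly the right point, using $\Delta(a)\in\cH\otimes\cA$), the appeal to the fifth bullet of \Cref{thm_mod} for (ii)$\Leftrightarrow$(iii) is legitimate via \Cref{lamint}, and the final claim via \Cref{cor_nondeg} matches the paper. This is close in spirit to the paper's argument, which also rests on the chain $\cV_{\tilde\Lambda}\subset\cA\subset\cA_{\tilde\Lambda}$ and \Cref{thm_mod}.

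However, there is a genuine lapse in the step ``the equality $\cA=\cV_{\tilde\Lambda}$ forces $\cA_{\tilde\Lambda}=\cV_{\tilde\Lambda}$.'' The inclusion chain alone does not give this: from $\cV_{\tilde\Lambda}=\cA\subset\cA_{\tilde\Lambda}$ nothing prevents the last inclusion from being strict, so the ``squeeze'' only works in the direction (iii)$\Rightarrow$(i), which you also state. To close (i)$\Rightarrow$(iii) you need an extra ingredient. The cheapest repair with what you already have: since $\cA$ is a unital subalgebra of $\cH$ we have $\I_\cH\in\cA$, so (i) gives $\I_\cH\in\cV_{\tilde\Lambda}$, which is precisely condition (ii) of \Cref{def_glel}, and then the fifth bullet of \Cref{thm_mod} (which you already invoke) yields (iii). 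Alternatively, argue as the paper does: by the first bullet of \Cref{thm_mod}, $\cV_{\tilde\Lambda}$ is a left ideal in $\cA_{\tilde\Lambda}$, so once $\I_\cH\in\cV_{\tilde\Lambda}$ one gets $\cA_{\tilde\Lambda}=\cA_{\tilde\Lambda}\cdot\I_\cH\subset\cV_{\tilde\Lambda}$; in fact the paper reduces all three bullets simultaneously to the single condition $\I_\cH\in\cV_{\tilde\Lambda}$ in this way, which is slightly more economical than treating the equivalences pairwise. With either one-line patch your proof is complete.
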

\begin{proof}
Note that $\cV_{\tilde\Lambda}\subset\cA\subset \cA_{\tilde\Lambda}$   and since  $\cV_{\tilde\Lambda}$ is a left ideal in $\cA_{\tilde\Lambda}$ (c.f. \Cref{thm_mod}),  all bullet points are equivalent with the condition $\I_\cH\in\cV_{\tilde\Lambda}$. The last claim of the proposition  follows from \Cref{cor_nondeg}. 
\end{proof}
In the next proposition we describe the relations between biannihilator condition from \Cref{biann}, non-degeneracy condition and one-dimensionality of $L_\mu^\cA$.
\begin{proposition}\label{biann_non} Let $\cA\subset \cH$ be a left coideal subalgebra satisfying the biannihilator condition. Then 
\begin{itemize}
    \item if $\tilde\Lambda\in L_\mu^\cA$ is non-degenerate and $\mu(\tilde\Lambda) \neq 0$ then $L_\mu^\cA = R_\mu^\cA = \kk\tilde\Lambda$; 
    \item if $L_\mu^\cA = \kk\hat\Lambda$  for some $0\neq \hat\Lambda\in L_\mu^\cA $  then $\hat\Lambda$ is non-degenerate. In particular if $\cA$ is Frobenius then $\hat\Lambda$ is non-degenerate. 
\end{itemize}
\end{proposition}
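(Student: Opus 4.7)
The plan is to exploit the biannihilator condition twice, together with the invertible twist $\ww$ from the proof of \Cref{unitality}.

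For the first bullet, set $P := \tilde\Lambda/\mu(\tilde\Lambda)$; then $P\in L_\mu^\cA$, $\mu(P)=1$, and $P^2 = P$, so that $aP = \mu(a)P$ for all $a\in \cA$ and $\cA P = \kk P$. By \Cref{bilem} and the biannihilator condition applied to the two-sided ideal $\ker\mu$, one has $r(R_\mu^\cA) = r(l(\ker\mu)) = \ker\mu\neq \cA$, forcing $R_\mu^\cA \neq \{0\}$. Any $\Lambda'\in R_\mu^\cA$ satisfies $\Lambda' = \mu(P)\Lambda' = \Lambda' P \in \cA P = \kk P$, so $R_\mu^\cA = \kk P$. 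In particular $P\in R_\mu^\cA$, which gives $Pa = \mu(a)P$ and hence $P\cA = \kk P$; then any $\Lambda'\in L_\mu^\cA$ equals $P\Lambda' \in P\cA = \kk P$, and we conclude $L_\mu^\cA = R_\mu^\cA = \kk\tilde\Lambda$.

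For the second bullet I would argue by contradiction, assuming $\cV_{\hat\Lambda}\subsetneq \cA$. Since $0\neq \hat\Lambda\in L_\mu^\cA$, \Cref{lamint} makes $\hat\Lambda$ a $\pi_\mu(\hat\Lambda)$-integral element with $\cA\subset \cA_{\hat\Lambda}$, so \Cref{thm_mod} tells us $\cV_{\hat\Lambda}$ is a left $\cA_{\hat\Lambda}$-ideal, and therefore a proper left ideal of $\cA$. The biannihilator condition then forces $r(\cV_{\hat\Lambda})\neq \{0\}$ (otherwise $\cV_{\hat\Lambda} = l(r(\cV_{\hat\Lambda})) = l(\{0\}) = \cA$, contradicting properness), so we may pick a nonzero $a_0\in r(\cV_{\hat\Lambda})$.

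Fix a minimal decomposition $\Delta(\hat\Lambda) = \sum_{i=1}^n X_i\otimes Y_i\in \cH\otimes \cA$ with $\{X_i\}\subset \cH$ linearly independent; cf.\ \Cref{lem_dim}. Then $\cV_{\hat\Lambda} = \mathrm{span}\{Y_i\}$, since for $\nu_j\in \cH'$ dual to $X_j$ on $\mathrm{span}\{X_i\}$ we have $Y_j = (\nu_j\otimes \id)\Delta(\hat\Lambda)\in \cV_{\hat\Lambda}$. Thus $Y_j a_0 = 0$ for every $j$, giving $\Delta(\hat\Lambda)(\I\otimes a_0) = \sum_i X_i\otimes Y_i a_0 = 0$, i.e.\ $\ww(\hat\Lambda\otimes a_0) = 0$; invertibility of $\ww$ then forces $\hat\Lambda\otimes a_0 = 0$, contradicting $\hat\Lambda\neq 0\neq a_0$. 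Hence $\cV_{\hat\Lambda} = \cA$. The ``in particular'' clause is immediate, since a Frobenius algebra is QF and therefore satisfies the biannihilator condition by \cite[Proposition 3.33]{Chir_Kasp_Szu}. The main subtlety is extracting the left-ideal-of-$\cA$ structure of $\cV_{\hat\Lambda}$ via \Cref{thm_mod} so that the biannihilator hypothesis can deliver the nonzero annihilator $a_0$ needed to invoke $\ww^{-1}$.
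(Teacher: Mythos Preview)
Your proof is correct but follows a genuinely different route from the paper's.

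For the first bullet, the paper actually uses the non-degeneracy hypothesis: it writes an arbitrary $\tilde\Lambda'\in R_\mu^\cA$ as $(\omega\otimes\id)\Delta(\tilde\Lambda)$ for some $\omega\in\cH'$, computes $\tilde\Lambda'\tilde\Lambda=\omega(\Lambda)\tilde\Lambda$, and divides by $\mu(\tilde\Lambda)$ to force $\tilde\Lambda'\in\kk\tilde\Lambda$; the biannihilator condition then handles $L_\mu^\cA$. Your argument bypasses non-degeneracy entirely: once $P=\tilde\Lambda/\mu(\tilde\Lambda)$ is an idempotent in $L_\mu^\cA$, the identity $\cA P=\kk P$ alone traps $R_\mu^\cA$ inside $\kk P$. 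So you have in fact proved a stronger statement (biannihilator plus $\mu(\tilde\Lambda)\neq 0$ suffice).

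For the second bullet the contrast is sharper. The paper uses the one-dimensionality of $L_\mu^\cA$ to find $\nu$ with $\hat\Lambda\in R_\nu^\cA$; this yields the identity $\Delta(\hat\Lambda)(S^{-1}(\pi_\nu(a))\otimes\I)=\Delta(\hat\Lambda)(\I\otimes a)$, which makes $\cV_{\hat\Lambda}$ a \emph{right} ideal in $\cA$. The biannihilator condition is then applied on the right-ideal side, together with the left-module faithfulness of $\cV_{\hat\Lambda}$ from \Cref{thm_mod}. You instead stay on the left-ideal side (where \Cref{thm_mod} places $\cV_{\hat\Lambda}$ directly) and use the invertibility of $\ww$ to show $r(\cV_{\hat\Lambda})=\{0\}$; the one-dimensionality hypothesis is never invoked. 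Your $\ww$-trick is essentially a slick proof that $\cV_{\hat\Lambda}$ is faithful as a \emph{right} $\cA$-module, dual to the left-faithfulness the paper quotes. Again your argument yields more: any nonzero $\hat\Lambda\in L_\mu^\cA$ is non-degenerate under the biannihilator condition.

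One minor remark on the ``in particular'' clause: since the statement phrases it as a special case of the second bullet (which assumes $\dim L_\mu^\cA=1$), a reader following the paper's logic would also want \Cref{thm_dim} invoked to verify that hypothesis; your stronger argument makes this unnecessary, but it may be worth saying so explicitly.
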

\begin{proof}
In order to prove the first bullet point of the proposition we first use non-degeneracy of $\tilde\Lambda$ to conclude that  given  $\tilde\Lambda'\in R^\cA_\mu$, there exists $\omega\in\cH'$ such that $\tilde\Lambda' = (\omega\otimes\id)(\Delta(\tilde\Lambda))$. In particular we have
\begin{equation}\label{eqti}\tilde\Lambda'\tilde\Lambda =\tilde\Lambda'\mu(\tilde\Lambda) = \omega(\Lambda)\tilde\Lambda \end{equation} where $\Lambda = \pi_\mu(\tilde\Lambda)$, see \Cref{lamint}. Dividing  \Cref{eqti} by  $\mu(\tilde\Lambda)$ we conlcude that $\tilde\Lambda'$ is a multiple of $\tilde\Lambda$, and hence  $\dim R_\mu^\cA\leq 1$. Using \Cref{bilem} we get   $R_\mu^\cA =\kk\tilde\Lambda$. In particular  $\kk\tilde\Lambda$ is a two sided ideal and we have   $\ker\mu = l(\kk\tilde\Lambda) = r(\kk\tilde\Lambda)$. The biannihilator condition together with \Cref{bilem} imply that $L_\mu^\cA = \kk\tilde\Lambda$.

In order to prove the claim entering the second bullet point of the proposition let us note that since $\kk\hat\Lambda$ is a two sided ideal, there exists a non-zero multiplicative functional $\nu\in\cA'$ such that $\hat\Lambda\in R_\nu^\cA$. Thus  for every $a\in\cA$ we have $(\I\otimes \hat\Lambda)\Delta(a)= \pi_\nu(a)\otimes\hat\Lambda$. Equivalently $\Delta(\hat\Lambda)(S^{-1}(\pi_\nu(a))\otimes\I) = \Delta(\hat\Lambda)(\I\otimes a)$ (c.f. \Cref{rem_til_notil}) and we conclude that $\cV_{\hat\Lambda}$ is right ideal in $\cA$. Using \Cref{thm_mod} we see that $\cV_{\hat\Lambda}$  is a two sided faithful ideal in $\cA$ thus 
\[l(\cV_{\hat\Lambda}) =\{a\in\cA: av = 0 \textrm{ for all } v\in\cV_\Lambda\} = \{0\}.\]
On the other hand, if $\cV_{\hat\Lambda}\subsetneq\cA$ and incidentally $\cA$ satisfies the biannihilator condition then $l(\cV_{\hat\Lambda})\neq \{0\}$ - contradiction. Hence    $\cV_{\hat\Lambda}=\cA$, which is one of the equivalent form of non-degeneracy of $\hat\Lambda$, c.f. \Cref{non_deg}.

Note that if $\cA$ is Frobienius then it is QF and thus it satisfies the bianihilator condition (see \Cref{biann} and the following  paragraph). We conclude the second part of the second bullet point using the first part and \Cref{thm_dim}. 
\end{proof}
The next result should be compared with  \Cref{biann_non}. 
\begin{proposition}\label{rem_twosided}
Let $\cA\subset\cH$ be a finite dimensional left coideal subalgebra and let   ${\tilde\Lambda}\in L^\cA_\mu$, $\mu(\tilde\Lambda)\neq 0$. 
If $\dim L_\mu^\cA= 1$ then $L_\mu^\cA= R_\mu^\cA = \kk\tilde\Lambda$.
\end{proposition}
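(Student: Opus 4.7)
My plan is to argue in two stages: first that $\tilde\Lambda$ itself lies in $R^\cA_\mu$, and then that no element of $R^\cA_\mu$ can fall outside $\kk\tilde\Lambda$. Together with the observation that $\dim L^\cA_\mu=1$ combined with $\tilde\Lambda\neq 0$ (which follows from $\mu(\tilde\Lambda)\neq 0$) already forces $L^\cA_\mu=\kk\tilde\Lambda$, these two stages give the desired equalities.

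For the first stage I would invoke \Cref{rem_semi1}, according to which $L^\cA_\mu$ is a two-sided ideal of $\cA$. Consequently, for every $a\in\cA$ the element $\tilde\Lambda a$ sits in $L^\cA_\mu=\kk\tilde\Lambda$, so there exists a scalar $\nu(a)\in\kk$ with $\tilde\Lambda a=\nu(a)\tilde\Lambda$. A straightforward computation using associativity in $\cA$ shows that $\nu\in\cA'$ is linear and multiplicative. Applying $\mu$ to both sides of $\tilde\Lambda a=\nu(a)\tilde\Lambda$ yields $\mu(\tilde\Lambda)\mu(a)=\nu(a)\mu(\tilde\Lambda)$, and dividing by the non-zero scalar $\mu(\tilde\Lambda)$ gives $\nu=\mu$. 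Hence $\tilde\Lambda\in R^\cA_\mu$.

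For the second stage I would pick an arbitrary $\tilde\Lambda'\in R^\cA_\mu$ and compute the product $\tilde\Lambda'\tilde\Lambda$ in two different ways. Viewing $\tilde\Lambda'$ as an element of $\cA$ acting on $\tilde\Lambda\in L^\cA_\mu$ gives $\tilde\Lambda'\tilde\Lambda=\mu(\tilde\Lambda')\tilde\Lambda$, while viewing $\tilde\Lambda$ as an element of $\cA$ acted on by $\tilde\Lambda'\in R^\cA_\mu$ gives $\tilde\Lambda'\tilde\Lambda=\mu(\tilde\Lambda)\tilde\Lambda'$. Equating the two expressions and dividing by $\mu(\tilde\Lambda)\neq 0$ yields $\tilde\Lambda'=\tfrac{\mu(\tilde\Lambda')}{\mu(\tilde\Lambda)}\tilde\Lambda\in\kk\tilde\Lambda$, which completes the argument. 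The only place that requires a careful check is verifying that the scalars $\nu(a)$ produced in the first stage assemble into a multiplicative functional that actually coincides with $\mu$; everything else is an immediate consequence of the ideal structure of $L^\cA_\mu$ together with the hypothesis $\mu(\tilde\Lambda)\neq 0$.
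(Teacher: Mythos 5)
Your argument is correct and follows essentially the same route as the paper: use the fact that $L^\cA_\mu=\kk\tilde\Lambda$ is a two-sided one-dimensional ideal to produce the functional $\nu$ with $\tilde\Lambda a=\nu(a)\tilde\Lambda$, identify $\nu=\mu$ by applying $\mu$ and cancelling $\mu(\tilde\Lambda)\neq 0$, and then compute $\tilde\Lambda'\tilde\Lambda$ in two ways to see that any $\tilde\Lambda'\in R^\cA_\mu$ is a scalar multiple of $\tilde\Lambda$. The only cosmetic difference is that you verify multiplicativity of $\nu$, which is not actually needed since $\nu=\mu$ is established pointwise.
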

\begin{proof}
Since   $L^\cA_\mu$ is a two sided 1-dimensional ideal,  there  exists a non-zero multiplicative functional   $\nu\in\cA'$ such that  ${\tilde\Lambda} a = \nu(a){\tilde\Lambda}$ for every $a\in \cA$, i.e. $\tilde\Lambda\in R_\nu^\cA$. 
Moreover  \[\mu({\tilde\Lambda} a) =\mu(a) \mu({\tilde\Lambda}) =\nu(a) \mu({\tilde\Lambda}).\] Thus we get  $\mu = \nu$ and $L^\cA_\mu\subset  R^\cA_\mu$. 
Furthermore if  $\tilde\Lambda'\in R^\cA_\mu$ then $\tilde\Lambda'\mu(\tilde\Lambda)=\tilde\Lambda'\tilde\Lambda = \tilde\Lambda \mu(\tilde\Lambda')$ and dividing both sides by $\mu(\tilde\Lambda)$ we conclude that $ R^\cA_\mu = \kk\tilde\Lambda$.
\end{proof}
\begin{definition}\label{unimod}
Let $\cA$ be a left coideal subalgebra of a Hopf algebra $\cH$. We say that $\cA$ is unimodular if $L_\varepsilon^\cA = R_\varepsilon^\cA$.
\end{definition}
  \Cref{biann_non}, \Cref{rem_twosided}   together with \cite[Theorem 6.1]{skr-proj} yield the next corollary.
\begin{corollary}\label{onetwo}
If $\cA$ is a left coideal  subalgebra of $\cH$, $\dim(L^\cA_\varepsilon) = 1$ and $\varepsilon(L_\varepsilon^\cA)\neq\{ 0\}$ then $\cA$ is unimodular.

If   $P\in\cH$ is a  projection  satisfying $\Delta(P)(\I\otimes P) = P\otimes P$ such that $\cV_P$ is a unital algebra satisfying  the biannihilator condition
 then $P$ is a right group-like projection.

If   $P\in\cH$ is a non-zero projection  satisfying $\Delta(P)(\I\otimes P) = P\otimes P$ and such that $\cA_P$ is Frobenius, then $P$ is a right group-like projection. 

The latter  holds if $\cH$ is weakly finite. 
\end{corollary}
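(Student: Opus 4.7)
The first bullet is immediate: since $\dim L^\cA_\varepsilon \geq 1$, \Cref{thm_fd} supplies $\dim\cA<\infty$, and picking any non-zero $\tilde\Lambda\in L^\cA_\varepsilon$ with $\varepsilon(\tilde\Lambda)\neq 0$, \Cref{rem_twosided} applied to $\mu=\varepsilon$ forces $L^\cA_\varepsilon = R^\cA_\varepsilon = \kk\tilde\Lambda$, i.e.\ unimodularity.

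The second bullet is the heart of the corollary. My plan is to view $\Delta(P)(\I\otimes P) = P\otimes P$ as saying that $P$ is of integral type in the sense of \Cref{def_glel} (with both $\tilde\Lambda$ and $\Lambda$ equal to $P$), whose associated multiplicative functional $\mu$ on $\cA_P$ then satisfies $\pi(P)=P$ and, from $\mu(P)P = P^2 = P$, also $\mu(P)=1$. The hypothesis that $\cV_P$ is a unital algebra, combined with \Cref{cor_nondeg}, upgrades $P$ to a non-degenerate integral element and forces $\cV_P = \cA_P$. The biannihilator hypothesis together with $\mu(P)\neq 0$ then activates the first bullet of \Cref{biann_non}, yielding $L^{\cV_P}_\mu = R^{\cV_P}_\mu = \kk P$; in particular $Pa = \mu(a)P$ for every $a\in\cV_P$. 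Since $\Delta(P)\in\cH\otimes\cV_P$, applying this right-integral identity in the second tensor factor produces
\[
(\I\otimes P)\Delta(P) \;=\; (\id\otimes\mu)\Delta(P)\otimes P \;=\; \pi(P)\otimes P \;=\; P\otimes P,
\]
which supplies the missing half of the right group-like identity.

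The third and fourth bullets reduce to the second. For the third, the Frobenius hypothesis on $\cA_P$ fed into the second bullet of \Cref{biann_non} directly forces non-degeneracy of $P$ and hence $\cV_P = \cA_P$; thus $\cV_P$ is itself Frobenius, unital, and satisfies the biannihilator condition, so the second bullet applies. For the fourth, \Cref{thm_mod} supplies $\dim\cA_P<\infty$ and \Cref{rem_skr} upgrades any finite-dimensional left coideal subalgebra of a weakly finite Hopf algebra to a Frobenius algebra, returning us to the third bullet.

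The main obstacle I foresee is the passage from the algebraic input (biannihilator or Frobenius) to the genuinely coalgebraic conclusion $(\I\otimes P)\Delta(P)=P\otimes P$. The whole argument hinges on the fact that $\Delta(P)$ already lies in $\cH\otimes\cV_P$, so that once the right-integral identity on $\cV_P$ is in hand, the coalgebraic conclusion collapses to the almost tautological $(\id\otimes\mu)\Delta(P) = \pi(P) = P$, where the last equality is just the defining identity of integral type.
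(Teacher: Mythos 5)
Your proof is correct and follows essentially the same route the paper intends: the corollary is stated there as a direct consequence of \cref{biann_non}, \cref{rem_twosided} and the Frobenius property from \cite[Theorem 6.1]{skr-proj}, which is exactly the chain you assemble (via \cref{thm_fd}, \cref{cor_nondeg}, \cref{thm_mod} and \cref{rem_skr}). Your explicit computation $(\I\otimes P)\Delta(P)=(\id\otimes\mu)\Delta(P)\otimes P=P\otimes P$ merely spells out the step the paper leaves implicit, and it is valid since $P\in R^{\cV_P}_\mu$ and $\Delta(P)\in\cH\otimes\cV_P$.
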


Given a pair of  elements $\Lambda_1,\Lambda_2\in\cH$ of integral type we say that they are equivalent if there exists $t\in\kk$ such that $\Lambda_1 = t\Lambda_2$. The equivalence class containing $\Lambda$ will be denoted by $[\Lambda]$.

The results of this section together with  \cite{skr-fin}   yield the following theorem. 
\begin{theorem}\label{11corr}
Let $\cH$ be a weakly finite Hopf algebra. Then every element $\Lambda$ of integral type is non-degenerate. Moreover the map $[\Lambda]\mapsto \cA_\Lambda$  establishes a 1-1  correspondence between finite dimensional left coideal subalgebras of $\cH$ and classes of integral type elements in $\cH$. In particular their number is finite  if $\cH$ is  a simple or cosemisimple finite dimensional Hopf algebra.
\end{theorem}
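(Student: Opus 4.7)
The plan is to establish the theorem in three stages: the non-degeneracy claim, the bijectivity of the assignment $[\Lambda]\mapsto\cA_\Lambda$, and the finiteness statement. The first claim is immediate from the second assertion of \Cref{cor_nondeg}, since an element of integral type $\Lambda\in\cH$ is by \Cref{def_glel} a $\Lambda$-integral element (with $\tilde\Lambda=\Lambda$) and $\cH$ is assumed weakly finite.

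For surjectivity of $[\Lambda]\mapsto\cA_\Lambda$, I would start with an arbitrary finite dimensional left coideal subalgebra $\cA\subset\cH$. By \Cref{rem_skr} the algebra $\cA$ is Frobenius, and since $\cA$ is unital the restriction $\varepsilon|_\cA$ is a non-zero multiplicative functional, so \Cref{weak_f} yields a non-zero $\Lambda\in L_{\varepsilon}^{\cA}$ with $\dim L_\varepsilon^\cA=1$. Any such $\Lambda$ is automatically of integral type: by \Cref{lamint} it is a $\pi_\varepsilon(\Lambda)$-integral element, and $\pi_\varepsilon=(\id\otimes\varepsilon)\Delta$ acts as the identity on $\cA$, so $\pi_\varepsilon(\Lambda)=\Lambda$ and the defining identity of \Cref{def_glel} holds. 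To see that $\cA_\Lambda=\cA$ I would use the inclusion $\cA\subset\cA_\Lambda$ already employed in the proof of \Cref{thm_fd}, combine it with the equality $\cA_\Lambda=\cV_\Lambda$ from the first part of the theorem, and observe that $\cV_\Lambda\subset\cA$ because $\cA$ is a left coideal containing $\Lambda$; the inclusions collapse to equality.

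For injectivity, suppose $\cA_{\Lambda_1}=\cA_{\Lambda_2}=\cA$. Non-degeneracy places each $\Lambda_i$ inside $\cV_{\Lambda_i}=\cA_{\Lambda_i}=\cA$, and the crux is to show that $\Lambda_i\in L_\varepsilon^\cA$. To this end I would write an arbitrary $a\in\cV_{\Lambda_i}=\cA$ as $a=(\omega\otimes\id)\Delta(\Lambda_i)$ for some $\omega\in\cH'$, apply $\omega\otimes\id$ to the integral type identity $\Delta(\Lambda_i)(\I\otimes\Lambda_i)=\Lambda_i\otimes\Lambda_i$ to obtain $a\Lambda_i=\omega(\Lambda_i)\Lambda_i$, and note that $\varepsilon(a)=(\omega\otimes\varepsilon)\Delta(\Lambda_i)=\omega(\Lambda_i)$; together these give $a\Lambda_i=\varepsilon(a)\Lambda_i$ for all $a\in\cA$. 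Thus $\Lambda_1,\Lambda_2\in L_\varepsilon^\cA$, and the one-dimensionality from \Cref{weak_f} forces $[\Lambda_1]=[\Lambda_2]$. The final finiteness statement then transports, through the bijection just constructed, the known finiteness of the set of finite dimensional left coideal subalgebras in simple or cosemisimple finite-dimensional Hopf algebras obtained in \cite{skr-fin}.

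The step I expect to be the main obstacle is the computation identifying the multiplicative functional attached to a non-degenerate integral type element with $\varepsilon|_{\cA_\Lambda}$; this is the rigidity that forces scalar equivalence (rather than equivalence up to a twist by a multiplicative functional) to be the correct relation on integral type elements. Everything else is bookkeeping with the maps $\pi_\mu$ and $\pi$ of \Cref{rem_til_notil} combined with the inputs packaged in \Cref{cor_nondeg}, \Cref{rem_skr} and \Cref{weak_f}.
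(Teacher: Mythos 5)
Your proposal is correct and follows essentially the route the paper intends: the paper offers no separate argument for this theorem beyond the remark that it follows from the results of the section together with \cite{skr-fin}, and your assembly of \Cref{cor_nondeg}, \Cref{rem_skr}, \Cref{weak_f}, \Cref{lamint} and the inclusion $\cA\subset\cA_\Lambda$ from \Cref{thm_fd} is exactly that derivation. In particular your computation showing that a non-degenerate integral type element lies in $L_\varepsilon^{\cA_\Lambda}$ (so that one-dimensionality forces scalar equivalence) is the right rigidity step, and the finiteness claim is correctly imported from \cite{skr-fin} via the bijection.
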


\section{Generalized cointegrals on left coideal subalgebras with integral}\label{Sec_gcoint}
Let us recall that a non-zero element $g$ of a coalgebra $\cC$ is said to be a group-like element if $\Delta(g) = g\otimes g$.
\begin{definition}\label{def_gcoint}
Let $\cV\subset \cC$ be a left coideal in a coalgebra $\cC$   and $g\in\cC$ a non-zero group-like element. We say that a functional $\phi\in\cV'$  is a $g$-cointegral on $\cV$ if $(\id\otimes\phi)(\Delta(a)) = \phi(a)g$ for all $a\in\cV$.  The subspace of $g$-cointegrals on $\cV$ will be denoted by $L^g_\cV$. 
\end{definition}

 Given an element $\Lambda$ in a coalgebra $\cC$ we can assign it with the smallest left (right) coideal  containing $\Lambda$    denoted $\cV_\Lambda$ (${}_\Lambda\cV$ respectively) and we have $\dim \cV_\Lambda = \dim{}_\Lambda\cV$, c.f. \Cref{lem_dim} and the paragraph before.  In the next proposition we shall use the bijection $U:\cV_\Lambda'\to {}_\Lambda\cV$ where \begin{equation}\label{def_U}U(\nu) = (\id\otimes\nu)(\Delta(\Lambda)).\end{equation}
\begin{proposition}\label{dim_cgint}Let $\cC$ be a  coalgebra, $g\in\cC$ a group-like element and $\Lambda\in \cC$. 
If $g\in {}_\Lambda\cV$ then $\dim L^g_{\cV_\Lambda}= 1$  and $\dim L^g_{\cV_\Lambda}=0$ otherwise. Moreover $\phi\in L^g_{\cV_\Lambda}$ is non-zero if and only if $\phi(\Lambda)\neq 0$.
\end{proposition}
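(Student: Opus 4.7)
The plan is to exploit the bijection $U : \cV_\Lambda' \to {}_\Lambda\cV$, $U(\nu) = (\id \otimes \nu)(\Delta(\Lambda))$ from \Cref{lem_dim}, and to reduce the full $g$-cointegral condition to a single identity evaluated at $\Lambda$.

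First I would observe that for any $\phi \in L^g_{\cV_\Lambda}$, specializing the defining identity to $a = \Lambda$ yields
\[U(\phi) = (\id\otimes\phi)(\Delta(\Lambda)) = \phi(\Lambda)\, g.\]
Since $U$ is injective, this already proves the easy half: if $g \notin {}_\Lambda\cV$, then $\phi(\Lambda)g \in {}_\Lambda\cV$ forces $\phi(\Lambda) = 0$, hence $U(\phi) = 0$, hence $\phi = 0$; and in general, $\phi$ is pinned down by $\phi(\Lambda)$, since $\phi = \phi(\Lambda)\,U^{-1}(g)$ whenever $g\in{}_\Lambda\cV$. This simultaneously gives $\dim L^g_{\cV_\Lambda}\le 1$, the dichotomy, and the characterization of non-vanishing of $\phi$ via $\phi(\Lambda)\ne 0$.

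It remains to show that when $g \in {}_\Lambda\cV$ the candidate $\phi_0 := U^{-1}(g)$ really is a $g$-cointegral, i.e.\ satisfies $(\id\otimes\phi_0)(\Delta(a)) = \phi_0(a)\, g$ for \emph{all} $a \in \cV_\Lambda$, not merely $a = \Lambda$. In Sweedler notation the defining equality reads $\Lambda_{(1)}\phi_0(\Lambda_{(2)}) = g$, and applying $\varepsilon$ and the counit axiom shows $\phi_0(\Lambda) = \varepsilon(g) = 1$, so $\phi_0 \neq 0$.

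The main step — really the only substantive one — is the upgrade from $\Lambda$ to arbitrary $a \in \cV_\Lambda$. I would apply $\Delta$ to the identity $\Lambda_{(1)}\phi_0(\Lambda_{(2)}) = g$ and invoke coassociativity to obtain
\[\Lambda_{(1)}\otimes \Lambda_{(2)}\,\phi_0(\Lambda_{(3)}) = g\otimes g.\]
Any $a \in \cV_\Lambda$ has the form $a = \mu(\Lambda_{(1)})\Lambda_{(2)}$ for some $\mu\in\cC'$, so pairing the displayed identity with $\mu$ on its first tensor leg gives
\[(\id\otimes\phi_0)(\Delta(a)) = \mu(\Lambda_{(1)})\Lambda_{(2)}\phi_0(\Lambda_{(3)}) = \mu(g)\,g = \phi_0(a)\,g,\]
using $\mu(g) = \mu(\Lambda_{(1)})\phi_0(\Lambda_{(2)}) = \phi_0(a)$. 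This verifies $\phi_0 \in L^g_{\cV_\Lambda}$ and, combined with the uniqueness observation above, yields $\dim L^g_{\cV_\Lambda} = 1$. I expect the coassociativity step to be the only place demanding care; once it is in place, everything else follows cleanly from the injectivity of $U$ and the counit axiom.
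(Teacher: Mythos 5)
Your proof is correct and takes essentially the same route as the paper: both reduce the $g$-cointegral condition to the single identity $U(\phi)=\phi(\Lambda)g$ and then conclude everything from the bijectivity of $U:\cV_\Lambda'\to{}_\Lambda\cV$. The only difference is that you spell out, via coassociativity, the upgrade from the identity at $\Lambda$ to all of $\cV_\Lambda$, a step the paper dismisses as ``easy to check''.
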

\begin{proof}
It is easy to check that $\phi\in\cV_\Lambda'$ is a $g$-cointegral if and only if 
\begin{equation}\label{gint_cond}U(\phi) = g\phi(\Lambda)\end{equation} where $U:\cV_\Lambda'\to {}_\Lambda\cV$ is given by \Cref{def_U}. In particular $\phi\neq 0 $ if and only if $\phi(\Lambda)\neq 0$ and    $\dim L^g_{\cV_\Lambda}\leq 1$. Clearly if $\dim L^g_{\cV_\Lambda}=1$ then  $g\in {}_\Lambda\cV$. Conversely if  $g\in {}_\Lambda\cV$ then  defining   $\phi = U^{-1}(g)\in\cV_\Lambda'$ we get a  non-zero  $g$-cointegral.
\end{proof}
Using \Cref{dim_cgint}, and \Cref{cor_frob} we get  the next result. 
\begin{theorem}\label{thm_gcoin}Let $\tilde\Lambda\in\cH$ be an element  of $\Lambda$-integral type and $g\in\cH$   a group-like element. Then  $\dim L^g_{\cV_{\tilde\Lambda}} = 1$ if and only if $g\in{}_{\tilde\Lambda}\cV$ and  $\dim L^g_{\cV_{\tilde\Lambda}} = 0$ otherwise. Moreover if $\phi\in L^g_{\cV_{\tilde\Lambda}}$ and $\phi\neq 0$ then ${\tilde\Lambda}$ is non-degenerate and  $\phi$ is faithful. Conversely if $\cA_{\tilde\Lambda}$ admits a faithful $g$-cointegral then ${\tilde\Lambda}$ is non-degenerate. 
 \end{theorem}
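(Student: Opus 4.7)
The plan is to deduce everything from \Cref{dim_cgint}, applied to $\cV_{\tilde\Lambda}$ with generator $\tilde\Lambda$, together with \Cref{cor_frob}. The bridge is the isomorphism $U\colon\cV_{\tilde\Lambda}'\to{}_{\tilde\Lambda}\cV$ of \Cref{thm_mod}, which turns the existence of a non-zero $g$-cointegral into a cyclicity statement for ${}_{\tilde\Lambda}\cV$ as a right $\cV_{\tilde\Lambda}$-module.

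The dimension dichotomy and the equivalence $\dim L^g_{\cV_{\tilde\Lambda}}=1\Leftrightarrow g\in{}_{\tilde\Lambda}\cV$ are immediate from \Cref{dim_cgint}, which moreover furnishes $U(\phi)=\phi(\tilde\Lambda)g$ with $\phi(\tilde\Lambda)\neq 0$ for any non-zero $\phi\in L^g_{\cV_{\tilde\Lambda}}$. To obtain faithfulness I would equip $\cV_{\tilde\Lambda}'$ with the right $\cA_{\tilde\Lambda}$-action $(\psi\cdot a)(v)=\psi(av)$, which is well defined because $\cV_{\tilde\Lambda}$ is a left ideal in $\cA_{\tilde\Lambda}$ (second bullet of \Cref{thm_mod}). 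Using that $U$ is a right $\cA_{\tilde\Lambda}$-module isomorphism (fourth bullet), I compute
\[
U(\phi\cdot a)=S(\pi(a))\,U(\phi)=\phi(\tilde\Lambda)\,S(\pi(a))\,g,\qquad a\in\cA_{\tilde\Lambda}.
\]
Since $g$ is group-like (hence invertible in $\cH$), $S$ is bijective, and $\pi|_{\cA_{\tilde\Lambda}}$ is injective by \Cref{rem_til_notil}, the map $a\mapsto\phi\cdot a$ is injective on the finite-dimensional algebra $\cV_{\tilde\Lambda}$, which is precisely the faithfulness of $\phi$. Counting dimensions it is also surjective, so through $U$ we conclude $g\cdot\cV_{\tilde\Lambda}={}_{\tilde\Lambda}\cV$, i.e.\ ${}_{\tilde\Lambda}\cV$ is cyclic as a right $\cV_{\tilde\Lambda}$-module. \Cref{cor_frob} then yields $\I_\cH\in\cV_{\tilde\Lambda}$, which is non-degeneracy of $\tilde\Lambda$.

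For the converse, suppose $\phi$ is a faithful $g$-cointegral on $\cA_{\tilde\Lambda}$. Applied to $\tilde\Lambda\neq 0$, faithfulness (in the form $\phi(b\,\cdot)=0\ \forall b\Rightarrow\cdot=0$) yields $b\in\cA_{\tilde\Lambda}$ with $\phi(b\tilde\Lambda)\neq 0$, and $b\tilde\Lambda\in\cV_{\tilde\Lambda}$ because $\cV_{\tilde\Lambda}$ is a left $\cA_{\tilde\Lambda}$-ideal. Hence $\phi|_{\cV_{\tilde\Lambda}}$ is a non-zero $g$-cointegral on $\cV_{\tilde\Lambda}$, and the first part gives the non-degeneracy of $\tilde\Lambda$.

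The main obstacle I anticipate is the identity $U(\phi\cdot a)=S(\pi(a))U(\phi)$ underlying the argument, which is obtained by applying $\id\otimes\phi$ to the relation $(\I\otimes a)\Delta(\tilde\Lambda)=(S(\pi(a))\otimes\I)\Delta(\tilde\Lambda)$ of \Cref{thm_mod}; once this module-map compatibility is in place, the remaining steps are formal.
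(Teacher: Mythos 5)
Your argument is correct, and for the core of the theorem it is the same route as the paper's: the dimension dichotomy comes straight from \Cref{dim_cgint}, and the non-degeneracy/faithfulness claim is exactly what the paper compresses into its citation of \Cref{cor_frob} — your explicit computation $U(\phi\cdot a)=S(\pi(a))U(\phi)=\phi(\tilde\Lambda)S(\pi(a))g$ (which is just the module-map property of $U$ from \Cref{thm_mod}, so your anticipated "obstacle" is already supplied by the cited results), together with invertibility of $g$, bijectivity of $S$ and injectivity of $\pi$, is precisely the missing detail showing that $a\mapsto\phi\cdot a$ is injective (faithfulness) and, by the dimension count, that $g$ is a cyclic vector of ${}_{\tilde\Lambda}\cV$, so that \Cref{cor_frob} gives $\I_\cH\in\cV_{\tilde\Lambda}$. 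Where you genuinely diverge is the converse: the paper deduces non-degeneracy from the second bullet of \Cref{biann_non} (a faithful functional makes $\cA_{\tilde\Lambda}$ Frobenius, hence $L_\mu^{\cA_{\tilde\Lambda}}=\kk\tilde\Lambda$ by \Cref{thm_dim}, and the biannihilator condition forces $\cV_{\tilde\Lambda}=\cA_{\tilde\Lambda}$), whereas you simply restrict the faithful cointegral to $\cV_{\tilde\Lambda}$, use faithfulness (in the equivalent right-hand form) plus the fact that $\tilde\Lambda\in\cV_{\tilde\Lambda}$ and $\cV_{\tilde\Lambda}$ is a left $\cA_{\tilde\Lambda}$-ideal to see the restriction is non-zero (even more directly, $\phi(b\tilde\Lambda)=\mu(b)\phi(\tilde\Lambda)$ by \Cref{integr}), and then invoke the already-proved part of the theorem. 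Your version of the converse is more elementary and self-contained, avoiding the annihilator machinery; the paper's version has the side benefit of routing through \Cref{biann_non}, which it reuses elsewhere. The only blemishes are cosmetic: the left-ideal property of $\cV_{\tilde\Lambda}$ is the first, not the second, bullet of \Cref{thm_mod}.
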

\begin{proof}
Applying \Cref{dim_cgint} to $\cV_{\tilde\Lambda}$ we conclude that  $\dim L^g_{\cV_{\tilde\Lambda}} = 1$ if and only if $g\in{}_{\tilde\Lambda}\cV$ and  $\dim L^g_{\cV_{\tilde\Lambda}} = 0$ otherwise. If $\phi\in L^g_{\cV_{\tilde\Lambda}}$ is non-zero then $\I_\cH\in\cV_{\tilde\Lambda}$ (i.e. $\tilde\Lambda$ is non-degenerate) and $\phi$ is faithful, c.f. \Cref{cor_frob}. 

If  $\cA_{\tilde\Lambda}$ admits a faithful $g$-cointegral then  $\tilde\Lambda$ is non-degenerate due to the second bullet point of \Cref{biann_non}.
\end{proof}

For the sake of simplicity of  presentation the next results  will be formulated for the elements of integral types. Their generalizations  to elements of $\Lambda$-integral types can easily be written  on the basis of  \Cref{rem_til_notil}: one must replace $\Lambda$ with $\tilde\Lambda$ and $S$ with $S\circ\pi$ where $\pi$ is the  homomorphism entering \Cref{rem_til_notil}.



\begin{theorem}\label{thm_two_sided}
  Let $\Lambda\in\cH$ be an  element of integral type.
Then $\cA_\Lambda$ admits a faithful  $g$-cointegral if and only if   ${}_\Lambda\cV = S(\cA_\Lambda)g$.
In particular  if $\cA_\Lambda$ admits a faithful non zero $g$-cointegral  then
\begin{itemize}
    \item $(\I\otimes \Lambda)\Delta(\Lambda) = S(\Lambda)g\otimes \Lambda$;
    \item $L_\varepsilon^\cA=R_\varepsilon^\cA$  if and only if  $\Lambda = S(\Lambda)g$.
\end{itemize}  
\end{theorem}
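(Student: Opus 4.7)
The plan is to use the right $\cA_\Lambda$-module isomorphism $U\colon\cV_\Lambda'\to{}_\Lambda\cV$ from \Cref{thm_mod} to transport the faithful-cointegral property into a condition on ${}_\Lambda\cV$, and then to read off both ``in particular'' claims from the intertwining identity of \Cref{thm_mod} combined with the defining equation of the cointegral.

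For the main equivalence, I would start from a faithful $g$-cointegral $\phi$ on $\cA_\Lambda$: by \Cref{thm_gcoin} this makes $\Lambda$ non-degenerate, hence $\cV_\Lambda=\cA_\Lambda$, and by \Cref{cor_frob} the algebra $\cA_\Lambda$ is Frobenius. Then \Cref{thm_frob_cond} turns faithfulness of $\phi$ into cyclicity of $\phi$ as a right $\cA_\Lambda$-module generator of $\cV_\Lambda'$. Since the proof of \Cref{dim_cgint} gives $U(\phi)=g\phi(\Lambda)$ with $\phi(\Lambda)\neq 0$, and since $\pi_\mu$ is a bijection of $\cA_\Lambda$ (injective by \Cref{lamint}, hence bijective by finite-dimensionality), this cyclicity would transport through $U$ to ${}_\Lambda\cV=S(\pi_\mu(\cA_\Lambda))g=S(\cA_\Lambda)g$. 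For the converse I would extract a non-zero $\phi$ from \Cref{dim_cgint} (using $g\in{}_\Lambda\cV$) and then deduce non-degeneracy from a dimension count: the map $a\mapsto S(\pi_\mu(a))g$ is surjective by assumption and injective (since $S$, right multiplication by the invertible group-like $g$, and $\pi_\mu$ are each injective), so $\dim\cA_\Lambda=\dim{}_\Lambda\cV=\dim\cV_\Lambda$, forcing $\cV_\Lambda=\cA_\Lambda$; \Cref{cor_frob} together with the cyclicity would then yield the faithfulness.

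With $\phi$ now a faithful $g$-cointegral in hand, I would establish $(\I\otimes\Lambda)\Delta(\Lambda)=S(\Lambda)g\otimes\Lambda$ by first noting $\pi_\mu(\Lambda)=\Lambda$ (immediate from the integral-type equation) so that \Cref{thm_mod} rewrites the left side as $(S(\Lambda)\otimes\I)\Delta(\Lambda)=S(\Lambda)\Lambda_{(1)}\otimes\Lambda_{(2)}$; both sides then live in $\cH\otimes\cV_\Lambda$ by the left-ideal property of \Cref{thm_mod}. Pairing with $(\id\otimes(\phi\cdot a))$ for $a\in\cA_\Lambda$, invoking $a\Lambda=\mu(a)\Lambda$ and the $g$-cointegral relation $\Lambda_{(1)}\phi(\Lambda_{(2)})=g\phi(\Lambda)$, would give the common value $\mu(a)S(\Lambda)g\phi(\Lambda)$ on each side; cyclicity then delivers the equality. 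For the unimodularity clause I would apply $(\omega\otimes\id)$ to this identity to obtain $\Lambda(\omega*\Lambda)=\omega(S(\Lambda)g)\Lambda$ for every $\omega\in\cH'$, and the direct check $\mu(\omega*\Lambda)=\omega(\pi_\mu(\Lambda))=\omega(\Lambda)$ would show $\Lambda\in R^{\cA_\Lambda}_\mu$ iff $\omega(\Lambda)=\omega(S(\Lambda)g)$ for all $\omega\in\cH'$, i.e.\ iff $\Lambda=S(\Lambda)g$; combined with $\Lambda\in L^{\cA_\Lambda}_\mu$ and $\dim L^{\cA_\Lambda}_\mu=1$ from \Cref{thm_dim}, this is equivalent to $L^{\cA_\Lambda}_\mu=R^{\cA_\Lambda}_\mu$, which specialises to the asserted $L^\cA_\varepsilon=R^\cA_\varepsilon$ when $\mu=\varepsilon$.

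The hard part will be the tensor-determination step in the identity: one has to first verify that both sides lie in $\cH\otimes\cV_\Lambda$ (via \Cref{thm_mod}) so that the cyclic family $\{\phi\cdot a\}_{a\in\cA_\Lambda}\subseteq\cV_\Lambda'$ genuinely separates the relevant tensors; without that reduction, pairing against $\phi\cdot a$ would only pin down one of the two tensor factors.
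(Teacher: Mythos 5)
Your handling of the main equivalence and of the first bullet point is essentially correct and close to the paper's own argument: you obtain non-degeneracy and Frobeniusness from \Cref{thm_gcoin} and \Cref{thm_frob_cond}, transport cyclicity of the faithful cointegral through the right-module isomorphism $U$ of \Cref{thm_mod} (and back, in the converse, via the dimension count), and you prove $(\I\otimes \Lambda)\Delta(\Lambda)=S(\Lambda)g\otimes\Lambda$ by separating points of $\cH\otimes\cV_\Lambda$ with the cyclic family $\{\phi\cdot a\}$. The paper instead notes that $\kk\Lambda$ is a two-sided ideal, so the left-hand side is a priori of the form $\tilde\Lambda\otimes\Lambda$, and identifies $\tilde\Lambda=S(\Lambda)g$ by a single application of $\id\otimes\phi$; both routes work, though in your pairing you should evaluate the unrewritten tensor $\Lambda_{(1)}\otimes\Lambda\Lambda_{(2)}$ against $\phi\cdot a$ (so that $a\Lambda=\mu(a)\Lambda$ is genuinely applicable) before invoking \Cref{eq_pi_til} at $a=\Lambda$; pairing the rewritten form $S(\Lambda)\Lambda_{(1)}\otimes\Lambda_{(2)}$ directly requires \Cref{eq_pi_til} for general $a$ as an extra ingredient.

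The genuine gap is at the very end, in the unimodularity clause. What your argument establishes is the equivalence $L^{\cA_\Lambda}_\mu=R^{\cA_\Lambda}_\mu$ if and only if $\Lambda=S(\Lambda)g$, where $\mu$ is the multiplicative functional canonically attached to $\Lambda$ (the one with $a\Lambda=\mu(a)\Lambda$, i.e.\ $\mu=\varepsilon\circ\pi$), and you then pass to the asserted $L^{\cA_\Lambda}_\varepsilon=R^{\cA_\Lambda}_\varepsilon$ only ``when $\mu=\varepsilon$''. The theorem asserts the $\varepsilon$-version unconditionally, so you still owe the fact that for an element of integral type one always has $\mu=\varepsilon$ on $\cA_\Lambda$, equivalently $\pi=\id$ --- exactly the point the paper uses when it writes ``remembering that $\pi=\id$''. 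The fact is true and easy to supply: by \Cref{rem_til_notil} and \Cref{lamint}, $\pi=(\id\otimes\mu)\circ\Delta$ is an injective algebra endomorphism of $\cA_\Lambda$ with $\pi(\Lambda)=\Lambda$ and $\Delta\circ\pi=(\id\otimes\pi)\circ\Delta$; applying $\pi$ to $a\Lambda=\mu(a)\Lambda$ gives $\mu(\pi(a))\Lambda=\pi(a)\Lambda=\mu(a)\Lambda$, so $\mu\circ\pi=\mu$, hence $\pi\circ\pi=(\id\otimes(\mu\circ\pi))\circ\Delta=\pi$ on $\cA_\Lambda$, and injectivity forces $\pi=\id$, whence $\mu=\varepsilon\circ\pi=\varepsilon$. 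Without this step (which would also let you drop $\pi_\mu$ from your earlier formulas), your proof yields the second bullet only for $\mu$, not for $\varepsilon$ as stated.
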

\begin{proof} 
If   ${}_\Lambda\cV  = S(\cA_\Lambda)g$ then  $g\in{}_\Lambda\cV$ and  using  \Cref{thm_gcoin} we get  $\cV_\Lambda = \cA_\Lambda$. In particular  $\cV_\Lambda$ is a unital algebra admitting a faithful $g$-cointegral. 
Conversely, if $\cA_\Lambda$ admits a faithful $g$-cointegral then using \Cref{thm_gcoin} we get $\cA_\Lambda = \cV_\Lambda$. In particular $g$ is a cyclic element of ${}_\Lambda\cV$, i.e. ${}_\Lambda\cV = S(\cV_\Lambda)g$.

In order to prove the first and the second bullet point note that under our assumptions  $\cA_\Lambda$  is a Frobenius algebra. In particular  $\kk\Lambda$ is a two sided ideal  in $\cA_\Lambda$ and we have \[ (\I\otimes\Lambda)\Delta(\Lambda) = \tilde\Lambda\otimes \Lambda\] for some $\tilde\Lambda\in{}_\Lambda\cV$. Using \Cref{eq_pi_til}  with $x=\Lambda$ and remembering that $\pi=\id$ we get
\begin{equation}\label{leftin}(S(\Lambda)\otimes\I)\Delta(\Lambda) = \tilde\Lambda\otimes \Lambda.\end{equation}
If  $\phi\in\cA_\Lambda'$ is a non-zero $g$-cointegral then applying $(\id\otimes\phi)$ to \Cref{leftin} we get  $S(\Lambda)g\phi(\Lambda) = \tilde\Lambda\phi(\Lambda)$. Since $\cA_\Lambda = \cV_\Lambda$ (c.f. \Cref{thm_gcoin}) we conclude using \Cref{dim_cgint} that $\phi(\Lambda)\neq 0$ and we get $S(\Lambda)g = \tilde\Lambda$.
\end{proof}
\begin{remark}\label{unimod1}
If $\Lambda\in\cH$ is an element of integral type such that $S(\Lambda) = \Lambda$ then it is easy to check that ${_\Lambda}\cV = S(\cV_\Lambda)$. Thus if $\Lambda$ is non-degenerate then $\cV_\Lambda$ admits faithful  $\I$-cointegral. In particular using \Cref{thm_two_sided} we conclude that $\cV_\Lambda$ is unimodular. In the next result we further link the  unimodularity property for coideals  with the existence of  non-trivial $\I$-cointegrals on them.
\end{remark}
\begin{theorem}\label{uni_cor}
Let $\Lambda\in\cH$ be an element of integral type. Then:
\begin{itemize}
    \item if $\cA_\Lambda$ is unimodular and it admits a faithful $\I$-cointegral then $S(\Lambda) = \Lambda$ and  $S^2(\cA_\Lambda) = \cA_\Lambda$;
    \item if $\Lambda$ is a projection and $\cA_\Lambda$ admits a faithful $\I$-cointegral then $\cA_\Lambda$ is unimodular, we have  $S^2(\cA_\Lambda) = \cA_\Lambda$, $\Lambda$ is two sided group-like projection and $\cA_\Lambda$ is semisimple; 
\item if $\cA$ is a semisimple left coideal subalgebra of $\cH$  preserved by $S^2$ then $\cA$ admits a unique (up multiplicative constant)  faithful $\I$-cointegral.
\end{itemize}
\end{theorem}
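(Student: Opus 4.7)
The plan is to read off both claims from \Cref{thm_two_sided} applied with $g=\I$. Unimodularity combined with its second bullet forces $\Lambda=S(\Lambda)$. For the $S^2$-invariance, \Cref{thm_gcoin} guarantees that a faithful $\I$-cointegral makes $\Lambda$ non-degenerate, so $\cV_\Lambda=\cA_\Lambda$, and the main assertion of \Cref{thm_two_sided} reads $S(\cA_\Lambda)={}_\Lambda\cV$. Applying $S$ to both sides produces a left coideal $S^2(\cA_\Lambda)$ of dimension $\dim\cA_\Lambda$ that contains $S(\Lambda)=\Lambda$; minimality of $\cV_\Lambda$ then forces $\cA_\Lambda=\cV_\Lambda\subseteq S^2(\cA_\Lambda)$, and equality follows by counting dimensions.

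\textbf{Second bullet.} The idea is to exploit the projection hypothesis to extract $\Lambda=S(\Lambda)$ from the identity $(\I\otimes\Lambda)\Delta(\Lambda)=S(\Lambda)\otimes\Lambda$ supplied by the first bullet of \Cref{thm_two_sided}. Applying $\varepsilon\otimes\id$ and using $\Lambda^2=\Lambda$ yields $\Lambda=\varepsilon(\Lambda)\Lambda$, hence $\varepsilon(\Lambda)=1$; then applying $\id\otimes\varepsilon$ gives $\Lambda=S(\Lambda)$. The second bullet of \Cref{thm_two_sided} now delivers unimodularity, and $S^2(\cA_\Lambda)=\cA_\Lambda$ follows exactly as in the previous paragraph. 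To upgrade $\Lambda$ to a two-sided group-like projection, I would combine the integral-type identity $\Delta(\Lambda)(\I\otimes\Lambda)=\Lambda\otimes\Lambda$ with $(\I\otimes\Lambda)\Delta(\Lambda)=\Lambda\otimes\Lambda$ (the first bullet of \Cref{thm_two_sided} after substituting $S(\Lambda)=\Lambda$), then apply the antimultiplicative map $S\otimes S$ together with $(S\otimes S)\Delta=\tau\Delta S$, and flip coordinates to harvest the two remaining identities $\Delta(\Lambda)(\Lambda\otimes\I)=\Lambda\otimes\Lambda$ and $(\Lambda\otimes\I)\Delta(\Lambda)=\Lambda\otimes\Lambda$. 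Semisimplicity of $\cA_\Lambda$ then follows from the converse direction proved in \cite[Theorem 3.27]{Chir_Kasp_Szu}, whose hypotheses---a right group-like projection in $\cA_\Lambda$ and $S^2(\cA_\Lambda)=\cA_\Lambda$---are now both in hand.

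\textbf{Third bullet.} The plan is to produce an integral on which $S$ acts trivially and then invoke \Cref{unimod1}. Semisimplicity gives, via \cite[Corollary 3.26]{Chir_Kasp_Szu}, a non-zero right group-like projection $P\in\cA$ with $\cA_P=\cA$; \Cref{rem_semi1} then identifies $L_\varepsilon^\cA=R_\varepsilon^\cA=\kk P$, so $P$ is central in $\cA$ and $\varepsilon(P)=1$. From $\Delta(P)(\I\otimes P)=P\otimes P$ I would apply $m(S\otimes\id)$ and the counit axiom to deduce $S(P)P=P$. The hypothesis $S^2(\cA)=\cA$ lets $S^2$ restrict to an algebra automorphism of $\cA$ with $\varepsilon\circ S^2=\varepsilon$, whence $aS^2(P)=\varepsilon(a)S^2(P)$ for every $a\in\cA$, forcing $S^2(P)\in L_\varepsilon^\cA=\kk P$, and evaluating $\varepsilon$ pins down $S^2(P)=P$. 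Applying $S$ to $S(P)P=P$ and using $S^2(P)=P$ gives $S(P)P=S(P)$, which combined with $S(P)P=P$ yields $S(P)=P$. Since $\cA$ is semisimple, hence Frobenius, the second bullet of \Cref{biann_non} provides non-degeneracy $\cV_P=\cA_P=\cA$, so \Cref{unimod1} furnishes a faithful $\I$-cointegral on $\cV_P=\cA$; uniqueness up to scalar is then the one-dimensionality statement in \Cref{thm_gcoin}. I expect the most delicate step to be the derivation $S(P)=P$: it hinges on dovetailing the group-like relation, the $S^2$-fixed-point argument extracted from $S^2$-invariance of $\cA$, and the antimultiplicativity of $S$ in a prescribed order, with each ingredient on its own supplying only partial information.
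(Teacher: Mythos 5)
Your proof is correct, and for the first two bullets it essentially retraces the paper's route with the steps reordered: the paper first gets unimodularity from \Cref{onetwo} and then feeds its own first bullet back in to obtain $S(\Lambda)=\Lambda$, whereas you extract $\varepsilon(\Lambda)=1$ and $S(\Lambda)=\Lambda$ directly by applying $\varepsilon\otimes\id$ and $\id\otimes\varepsilon$ to $(\I\otimes\Lambda)\Delta(\Lambda)=S(\Lambda)\otimes\Lambda$ and only then read off unimodularity from the second bullet of \Cref{thm_two_sided}; your explicit $(S\otimes S)$/flip computation for two-sidedness, and your minimality-plus-dimension-count argument for $S^2(\cA_\Lambda)=\cA_\Lambda$ (the paper just writes $S^2(\cA_\Lambda)=\cA_{S^2(\Lambda)}=\cA_\Lambda$), are sound substitutes for what the paper leaves implicit. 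The genuine divergence is in the third bullet: the paper invokes \cite[Theorems 3.26 and 3.27]{Chir_Kasp_Szu} to obtain at once a two-sided group-like projection $\Lambda$ with $\cA=\cA_\Lambda=\cV_\Lambda$ (so $S(\Lambda)=\Lambda$ comes for free), and then concludes from ${}_\Lambda\cV=S(\cV_\Lambda)$ and \Cref{thm_gcoin}; you use only the existence statement for semisimple $\cA$ and re-derive $S(P)=P$ inside the paper's own toolkit, via $S(P)P=P$ from the integral-type relation, $S^2(P)=P$ from $S^2(\cA)=\cA$ together with $L_\varepsilon^\cA=\kk P$, and antimultiplicativity, before finishing with \Cref{biann_non}, \Cref{unimod1} and \Cref{thm_gcoin}. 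This buys independence from \cite[Theorem 3.27]{Chir_Kasp_Szu} — in effect you re-prove the piece of it that is needed — at the cost of two small glosses you should make explicit: that $P\in L_\varepsilon^\cA$ already forces $\cA\subseteq\cA_P$ (since $\Delta(a)(\I\otimes P)=a\otimes P$ for $a\in\cA$), so \Cref{integr} applies and your group-like projection coincides with the central idempotent of \Cref{rem_semi1}; and that finite dimensionality of $\cA$ (hence the Frobenius property used in \Cref{biann_non}) is supplied by \Cref{thm_fd} once a non-zero $\varepsilon$-integral exists. The citation swap in the second bullet (\cite[Theorem 3.27]{Chir_Kasp_Szu} in place of the paper's \cite[Theorem 3.22]{Chir_Kasp_Szu}) is harmless, since you have both the two-sided group-like projection and $S^2(\cA_\Lambda)=\cA_\Lambda$ in hand.
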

\begin{proof}
If $\cA_\Lambda$ is unimodular and it admits faithful $\I$-cointegral then $S(\Lambda) = \Lambda$ by \Cref{thm_two_sided}. Moreover  $S^2(\cA_\Lambda) = \cA_{S^2(\Lambda)} = \cA_{\Lambda}$ and we get the first bullet point of out theorem.

In order to get the second bullet point assume that  $\Lambda^2=\Lambda$ . Since $\cA_\Lambda$ is Frobenius we conclude that it is unimodular by \Cref{onetwo}. In particular $(\I\otimes\Lambda)\Delta(\Lambda) = \Lambda\otimes\Lambda = \Delta(\Lambda)(\I\otimes\Lambda)$ and $S(\Lambda) =\Lambda$ by the first bullet point, i.e. $\Lambda$ is a two sided group-like projection. Using \cite[Theorem 3.22.]{Chir_Kasp_Szu}  we see that $\cA_\Lambda$ is semisimple.

In order to get the third bullet point let us assume that $\cA$ is a semisimple left coideal subalgebra of $\cH$  preserved by $S^2$. There exists a unique two sided group-like projection $\Lambda\in\cH$ such that $\cA= \cA_\Lambda=\cV_\Lambda$ (see \cite[Theorem 3.27]{Chir_Kasp_Szu} and \cite[Theorem 3.26]{Chir_Kasp_Szu}). The equality   $S(\Lambda) = \Lambda$ implies then that $S(\cV_\Lambda) ={}_\Lambda\cV$ and we conclude that $\dim L^\I_{\cA_\Lambda} = 1$ using \Cref{thm_gcoin}.
\end{proof}

Every right group-like projection in an algebraic quantum group is automatically two sided, see   \cite[Proposition 1.6]{LV}. In what follows we prove the counterpart of this result for cosemisimple weakly finite Hopf algebras.
\begin{theorem}\label{cor_glS}
Let $\cH$ be a weakly finite cosemisimple Hopf algebra and $\Lambda$ an integral type element.  Then $S(\Lambda) = \Lambda$ if and only if $(\I\otimes\Lambda) \Delta(\Lambda)=\Lambda\otimes\Lambda$. In particular every finite dimensional unimodular left coideal subalgebra $\cA\subset \cH$   is preserved by $S^2$ and  every    integral type projection  is a two sided group-like projection.
\end{theorem}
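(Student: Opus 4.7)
The plan is to establish the equivalence via the intermediate conditions of unimodularity of $\cA_\Lambda$ and existence of a faithful $\I$-cointegral on $\cA_\Lambda$. As preparation, I observe that weak finiteness of $\cH$ together with \Cref{cor_nondeg} forces $\Lambda$ to be non-degenerate, so $\cV_\Lambda = \cA_\Lambda$ is a finite-dimensional Frobenius algebra containing $\I_\cH$; moreover \Cref{weak_f} yields $L_\varepsilon^{\cA_\Lambda} = \kk\Lambda$ together with $\dim R_\varepsilon^{\cA_\Lambda} = 1$. For the implication $S(\Lambda) = \Lambda \Rightarrow (\I\otimes\Lambda)\Delta(\Lambda) = \Lambda\otimes\Lambda$, I first invoke \Cref{unimod1} to produce a faithful $\I$-cointegral on $\cA_\Lambda$, and the second bullet of \Cref{thm_two_sided} then gives unimodularity of $\cA_\Lambda$, i.e.\ $\Lambda\in R_\varepsilon^{\cA_\Lambda}$. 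Since $\Delta(\cA_\Lambda)\subset \cH\otimes \cA_\Lambda$ places $\Lambda_{(2)}$ in $\cA_\Lambda$, the equality $\Lambda\Lambda_{(2)} = \varepsilon(\Lambda_{(2)})\Lambda$ recombines into the desired $(\I\otimes\Lambda)\Delta(\Lambda) = \Lambda_{(1)}\otimes \Lambda\Lambda_{(2)} = \Lambda\otimes\Lambda$.

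For the reverse direction, I start from $(\I\otimes\Lambda)\Delta(\Lambda) = \Lambda\otimes\Lambda$ and apply $(\mu\otimes\id)$ to reach $\Lambda\cdot(\Lambda*\mu) = \mu(\Lambda)\Lambda$; noting $\{\Lambda*\mu:\mu\in\cH'\} = \cV_\Lambda = \cA_\Lambda$ together with $\varepsilon(\Lambda*\mu) = \mu(\Lambda)$, this reads as $\Lambda a = \varepsilon(a)\Lambda$ for every $a\in \cA_\Lambda$, which combined with $\Lambda\in L_\varepsilon^{\cA_\Lambda}$ and one-dimensionality of both $L$ and $R$ forces $\cA_\Lambda$ unimodular. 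Next I supply $\cA_\Lambda$ with a faithful $\I$-cointegral by restricting a faithful $\I$-cointegral $\phi\in \cH'$ of $\cH$ (available by cosemisimplicity, normalized so that $\phi(\I_\cH) \neq 0$): the restriction $\phi|_{\cA_\Lambda}$ is automatically a $\I$-cointegral on $\cA_\Lambda$ since $\cA_\Lambda$ is a left coideal, and it is non-zero because $\I_\cH\in \cA_\Lambda$ with $\phi(\I_\cH)\neq 0$, hence faithful by \Cref{thm_gcoin}. The first bullet of \Cref{uni_cor} then delivers $S(\Lambda) = \Lambda$. The main obstacle I foresee is confirming the non-vanishing of $\phi|_{\cA_\Lambda}$; this rests on the standard normalization $\phi(\I_\cH)\neq 0$ available for cosemisimple Hopf algebras (cf.\ \cite[Theorem 10.9.2]{Radford_book}) together with the non-degeneracy of $\Lambda$ guaranteeing $\I_\cH\in \cA_\Lambda$.

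For the two ``in particular'' consequences, I proceed as follows. Given a finite-dimensional unimodular left coideal subalgebra $\cA\subset \cH$, I pick a non-zero $\Lambda\in L_\varepsilon^\cA = R_\varepsilon^\cA$ (which exists and spans $L_\varepsilon^\cA$ by \Cref{weak_f}); this is an integral type element with $\cA = \cA_\Lambda$ via \Cref{11corr}, and the Sweedler calculation of the first paragraph applied to this $\Lambda$ (with unimodularity already in hand) yields $(\I\otimes\Lambda)\Delta(\Lambda) = \Lambda\otimes\Lambda$, hence $S(\Lambda) = \Lambda$ by the main equivalence, and therefore $S^2(\cA) = \cA_{S^2(\Lambda)} = \cA_\Lambda = \cA$. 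For an integral type projection $\Lambda\in \cH$, the Haar restriction argument of the previous paragraph once more supplies a faithful $\I$-cointegral on $\cA_\Lambda$, and the second bullet of \Cref{uni_cor} then directly declares $\Lambda$ a two-sided group-like projection.
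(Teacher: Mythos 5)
Your proposal is correct and follows essentially the same route as the paper: non-degeneracy from weak finiteness, \Cref{unimod1} together with \Cref{thm_two_sided} for the implication $S(\Lambda)=\Lambda\Rightarrow(\I\otimes\Lambda)\Delta(\Lambda)=\Lambda\otimes\Lambda$, and for the converse the restriction of the unital $\I$-cointegral of the cosemisimple $\cH$ to $\cA_\Lambda$ followed by \Cref{uni_cor}. The only differences are cosmetic (you derive unimodularity and redo the Sweedler computation where the paper quotes the first bullet of \Cref{thm_two_sided}, and you settle the projection claim via the second bullet of \Cref{uni_cor} rather than \Cref{onetwo} plus the main equivalence), and you usefully spell out steps the paper leaves terse.
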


\begin{proof}
Let us recall that every integral type element in a weakly finite Hopf algebra is non-degenerate and   every coideal subalgebra $\cA\subset \cH$ is of the form $\cA = \cV_\Lambda$, c.f. \Cref{11corr}.  

If $S(\Lambda) = \Lambda$ then  since  $\Lambda$ is non-degenerate we can conclude that  $(\I\otimes\Lambda)\Delta(\Lambda) = \Lambda\otimes\Lambda$ using \Cref{unimod1}. Conversely if $(\I\otimes\Lambda)\Delta(\Lambda) = \Lambda\otimes\Lambda$ then $\cV_\Lambda$ is unimodular and it admits a faithful $\I$-cointegral (the restriction of the unital $\I$-cointegral on $\cH)$. Using \Cref{uni_cor} we see that $S(\Lambda) = \Lambda$. 

If $\cA$ is a unimodular finite dimensional left coideal subalgebra of $\cH$ then using the previous paragraph we get   $\Lambda\in\cH$ such that $\cA = \cV_\Lambda$ and $S(\Lambda) = \Lambda$. In particular $S^2(\cA) = \cA$. 

Finally if $\Lambda^2 = \Lambda$ then using \Cref{onetwo} we see that $(\I\otimes\Lambda)\Delta(\Lambda) = \Lambda\otimes\Lambda$ and hence $S(\Lambda) = \Lambda$ (by the first part of the theorem being proven), i.e. $\Lambda$ is a two-sided group-like projection.
\end{proof}
\begin{remark}\label{rem_LV}
Let $\cH$ be  finite dimensional  cosemisimple  Hopf algebra. It still unknown if $S^2=\id$ but our previous result shows at least  that unimodular left coideal subalgebras of $\cH$ must be preserved by $S^2$ in this case. Unfortunately we were not able to drop unimodularity assumption in this result.  

Let us note that the method of the proof of \Cref{cor_glS}  enables us also to prove that every non-degenerate integral type projection $\Lambda$ in a cosemisimple Hopf algebra $\cH$ is a two sided projection. Indeed, since the restriction of (unital)  $\I$-cointegral from $\cH$  to (unital) algebra $\cV_\Lambda$ is non-zero, $\cV_\Lambda$ itself admits a non-zero $\I$-cointegral $\phi\in L^\I_{\cV_\Lambda}$. Using \Cref{thm_gcoin} we see that $\phi$ is faithful and thus $\cV_\Lambda$ is unimodular (see \Cref{onetwo}). Using \Cref{uni_cor} we conclude that $S(\Lambda) = \Lambda$ and we are done. 
\end{remark}
The next result links   semisimplicity of $\cV_\Lambda$ with the fact that it is Frobenius and it can be viewed as a generalization of \cite[Theorem 3.22]{Chir_Kasp_Szu}. Let us recall that $\cV_\Lambda$ is Frobenius if and only if ${}_\Lambda\cV$ is a cyclic $\cV_\Lambda$-module (c.f. \Cref{cor_frob}). 
\begin{theorem}\label{thm_semi}
Let $P\in\cH$ be a right  group-like projection and assume that $\cV_P$ is a Frobenius algebra with a cyclic (hence separating) element $y\in{}_P\cV$. Consider the map $\iota_y:{}_P\cV\to \cV_P$  given by $\iota_y(S(a)y) = a$ and define  $z_y = P_{(2)}\iota_y(P_{(1)})\in\cV_\Lambda$. If $z_y$ is  an invertible element of $\cV_P$ then every right $\cV_P$-module is completely reducible. In particular $\cV_P$ is semisimple. 
\end{theorem}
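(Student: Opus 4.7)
The plan is to establish complete reducibility of every right $\cV_P$-module by a Maschke-type averaging, with $z_y$ playing the role of $|G|$ in the group algebra case. Given a right $\cV_P$-module $V$, a submodule $W\subset V$ and an arbitrary linear projection $q:V\to W$, I would define
\[
Q(v):=q(v\cdot P_{(2)})\cdot \iota_y(P_{(1)}),\qquad \Delta(P)=P_{(1)}\otimes P_{(2)}\in {}_P\cV\otimes \cV_P.
\]
Since $W$ is a $\cV_P$-submodule and $\iota_y(P_{(1)})\in\cV_P$, the image of $Q$ lies in $W$.

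The first and most delicate step is to check that $Q$ is a morphism of right $\cV_P$-modules. Note that Frobenius-ness of $\cV_P$ forces unitality (\Cref{def_frob}), hence $P$ is non-degenerate and $\cA_P=\cV_P$ with $\pi=\id$; in particular \Cref{thm_mod} applied to $P$ gives $(\I\otimes a)\Delta(P)=(S(a)\otimes\I)\Delta(P)$ for every $a\in\cV_P$. Using that $\iota_y$ is the inverse of $\cV_P\ni a\mapsto S(a)y\in{}_P\cV$ and that $S(a)S(b)=S(ba)$, one verifies $\iota_y(S(a)P_{(1)})=\iota_y(P_{(1)})\,a$; applying $\iota_y$ to the first tensor leg of the preceding identity then yields the key relation
\[
a\,P_{(2)}\otimes \iota_y(P_{(1)}) \;=\; P_{(2)}\otimes \iota_y(P_{(1)})\,a \quad\text{in } \cV_P\otimes\cV_P.
\]
Substituting this into the definition of $Q$ gives $Q(v\cdot a)=Q(v)\cdot a$, while multiplying its two sides inside $\cV_P$ yields $a\,z_y=z_y\,a$, so $z_y$ is automatically \emph{central} in $\cV_P$.

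For $w\in W$ one has $q(w\cdot P_{(2)})=w\cdot P_{(2)}$, whence $Q(w)=w\cdot z_y$. Under the invertibility hypothesis on $z_y$, I would set $\tilde Q(v):=Q(v)\cdot z_y^{-1}$; centrality of $z_y$ ensures $\tilde Q$ remains a $\cV_P$-module map, while $\tilde Q|_W=\id_W$. Hence $\tilde Q:V\to W$ is a module-theoretic retraction and $V=W\oplus\ker\tilde Q$ as right $\cV_P$-modules. Since $V$ and $W$ were arbitrary, every right $\cV_P$-module is completely reducible, and applying this to the regular module gives semisimplicity of $\cV_P$. The one technical point on which the whole proof rests is the compatibility identity $\iota_y(S(a)P_{(1)})=\iota_y(P_{(1)})\,a$, which encodes the interplay between $\iota_y$ and the right $\cV_P$-action $x\cdot a=S(a)x$ on ${}_P\cV$; once this is established, the rest of the argument is a formal unwinding.
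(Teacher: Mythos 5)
Your proposal is correct and follows essentially the same route as the paper: the same averaging operator $q(x P_{(2)})\iota_y(P_{(1)})$, the same key compatibility $\iota_y(S(b)x)=\iota_y(x)b$ combined with $(\I\otimes b)\Delta(P)=(S(b)\otimes\I)\Delta(P)$, centrality of $z_y$, and the correction by $z_y^{-1}$ to obtain a module-theoretic retraction. The only cosmetic difference is that you package the computation once as a tensor identity and read off both the module-map property and centrality from it, whereas the paper performs the two computations separately.
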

\begin{proof}

Note that if $a\in\cV_P$ and $x = S(a)y$ then $\iota_y(S(b)x) =\iota_y(S(b)S(a)y) = ab=\iota_y(x)b$ for all $b\in\cV_P$.
In particular, using the identity $(\I\otimes b)\Delta(P) = (S(b)\otimes\I)\Delta(P)$, we get \begin{align*}bz_y = bP_{(2)}\iota_y(P_{(1)}) &= P_{(2)}\iota_y(S(b)P_{(1)})\\& = P_{(2)}\iota_y(P_{(1)})b = z_yb\end{align*} for all $b\in\cV_P$ and hence $z_y$ is a central (invertible) element in $\cV_P$.

Suppose that $M$ is a right $\cV_P$-module and $N\subset M$ is a submodule and let $q:M\to M$ be a linear  projection onto $N$. Let us define $Q_0:M\to M$ by the formula
\[Q_0(x) = q(xP_{(2)})\iota_y(P_{(1)}).\] Using $(\I\otimes b)\Delta(P) = (S(b)\otimes\I)\Delta(P)$ again we get
\begin{align*}Q_0(xb)&=q(xbP_{(2)})\iota_y(P_{(1)})\\ &= q(xP_{(2)})\iota_y(S(b)P_{(1)})\\&= Q_0(x)b\end{align*} and thus $Q_0$ is a $\cV_P$-module  map. Moreover if $x\in N$ then $Q_0(x) = xP_{(2)}\iota_y(P_{(1)})=xz_y$. Hence if we put  $Q(x) = Q_0(x)z_y^{-1}$ then $Q:M\to M$ is still a $\cV_{P}$-module map  (note that we use here the centrality of $z_y^{-1}$) which is a  projection onto $N$. Taking $N'=\ker Q$ we get decomposition  $M = N\oplus N'$  and we are done. 
\end{proof}
\begin{remark} 
Assuming that $y\in{}_P\cV$ entering the assumptions of  \Cref{thm_semi} is an invertible element of $\cH$ we have  $z = \textrm{Ad}_P(S(y^{-1}))$ where we define $\textrm{Ad}_P(a) = P_{(2)}aS^{-1}(P_{(1)})$ for all $a\in\cH$. In particular if $\cV_P$ admits a $g$-cointegral then we can take $y=g$ (c.f. \Cref{thm_gcoin}) and in this case $z_g=\textrm{Ad}_P(g)$. Unfortunately we were not able to prove that $z_g$ is necessarily invertible. In \Cref{example_Taft}   we compute  those elements for semisimple left coideal subalgebras of Taft algebras showing   that they are invertible in this case. 
\end{remark}
  
Let us finish this section with a recapitulation of  the results of this paper  for  finite dimensional Hopf algebras.
\begin{theorem}
Let $\cH$ be a finite dimensional Hopf algebra $\cA\subset\cH$ a left coideal subalgebra $\mu\in\cA'$ a non-zero multiplicative functional and $g\in\cH$ a group-like element.
\begin{itemize}
    \item [(a)] The ideal $L_\mu^\cA$ of left $\mu$-integrals is one dimensional.
    \item[(b)] Let $\tilde\Lambda\in L_\mu^\cA$, $\tilde\Lambda\ne 0$ and $\phi\in L^g_\cA$. Then $\phi\neq 0$ if and only if $\phi(\tilde\Lambda)\neq 0$. 
    \item[(c)] Suppose that $\tilde\Lambda\in L_\mu^\cA$, $\tilde\Lambda\neq 0$ and $\phi\in L_g^\cA$ is such that $\phi(\tilde\Lambda) = 1$. Then 
    \begin{itemize}
        \item $\tilde\Lambda*(\phi\cdot a)=S(\pi(a))g$ for all $a\in\cA$  where $\pi:\cA\to \cH$ is the homomorphism described in \Cref{rem_til_notil}.
        \item $\tilde\Lambda*\phi = g$ and $\tilde\Lambda\cdot\phi = \mu$.
    \end{itemize}
    \item[(d)] For every group-like element  $g\in\cH$ every $\nu\in\cH'$ and every $\phi\in L_g^\cA$ we have $\nu*\phi = \nu(g)\phi$. Moreover $\dim L_g^\cA \in\{0,1\}$ and $\dim L_g^\cA = 1$ if there exists $\phi\in\cA'$ such that $\tilde\Lambda*\phi = g$. In this case $\phi$ is a faithful functional on $\cA$. Conversely if $L\subset \cA'$ is $1$-dimensional $\cH'$-submodule of $\cA'$ then there exists $g\in\cH$ such that $L = L_g^\cA$. 
\end{itemize}
\end{theorem}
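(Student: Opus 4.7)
The plan is to deduce each of (a)--(d) from results already established in the paper, with only the converse direction of (d) requiring a genuinely new ingredient.

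For (a), since every finite dimensional algebra is weakly finite, $\cH$ itself is weakly finite and \Cref{weak_f} gives $\dim L_\mu^\cA = 1$. For (b) I would first invoke \Cref{rem_skr} to note that $\cA$ is Frobenius, hence satisfies the biannihilator condition, so \Cref{biann_non} forces any non-zero $\tilde\Lambda\in L_\mu^\cA$ to be non-degenerate, i.e. $\cV_{\tilde\Lambda} = \cA$. Applying \Cref{dim_cgint} with $\Lambda = \tilde\Lambda$ then returns the equivalence $\phi\ne 0 \iff \phi(\tilde\Lambda)\ne 0$ verbatim.

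For (c) the idea is to write
\[\tilde\Lambda*(\phi\cdot a) \;=\; (\id\otimes\phi)\bigl((\I\otimes a)\Delta(\tilde\Lambda)\bigr),\]
apply the identity $(\I\otimes a)\Delta(\tilde\Lambda) = (S(\pi(a))\otimes\I)\Delta(\tilde\Lambda)$ from \Cref{thm_mod}, and close with the $g$-cointegral relation $(\id\otimes\phi)\Delta(\tilde\Lambda) = g\phi(\tilde\Lambda) = g$; the right hand side collapses to $S(\pi(a))g$. Specialising $a = \I$ (and using $\pi(\I) = \I$) yields $\tilde\Lambda*\phi = g$, while the direct computation $(\tilde\Lambda\cdot\phi)(b) = \phi(b\tilde\Lambda) = \mu(b)\phi(\tilde\Lambda) = \mu(b)$ gives $\tilde\Lambda\cdot\phi = \mu$.

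For (d), the formula $\nu*\phi = \nu(g)\phi$ follows at once by applying $\nu\in\cH'$ to $(\id\otimes\phi)\Delta(a) = g\phi(a)$. The bound $\dim L_g^\cA\in\{0,1\}$ is again \Cref{dim_cgint}, and existence of $\phi\in\cA'$ with $\tilde\Lambda*\phi = g$ is equivalent to $g\in{}_{\tilde\Lambda}\cV$ via the bijection $U\colon\cA'\to{}_{\tilde\Lambda}\cV$ of \Cref{lem_dim}, hence to $\dim L_g^\cA = 1$ by \Cref{dim_cgint}; faithfulness of a non-zero $\phi\in L_g^\cA$ is \Cref{thm_gcoin}. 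For the converse, a one dimensional $\cH'$-submodule $L = \kk\phi\subset\cA'$ yields an algebra character $\chi\colon\cH'\to\kk$ via $\nu*\phi = \chi(\nu)\phi$. Since $\cH$ is finite dimensional, the canonical isomorphism $\cH\cong(\cH')'$ identifies algebra characters of $\cH'$ with group-like elements of $\cH$, producing a unique group-like $g\in\cH$ with $\chi(\nu) = \nu(g)$. Unwinding gives $(\nu\otimes\phi)\Delta(a) = \nu(g)\phi(a)$ for every $\nu\in\cH'$ and $a\in\cA$; non-degeneracy of the canonical pairing $\cH\otimes\cH'\to\kk$ lifts this to $(\id\otimes\phi)\Delta(a) = g\phi(a)$, so $\phi\in L_g^\cA$ and the dimension bound forces $L = L_g^\cA$.

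The only step I expect to require care is this last identification: an abstract algebra character of $\cH'$ has to be recognised as evaluation at a group-like element of $\cH$. This rests essentially on the finite dimensional duality $\cH = \cH''$ together with the standard bijection between characters of $\cH'$ and group-like elements of $\cH$; nothing else in the argument goes beyond direct citation of earlier results.
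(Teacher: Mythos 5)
Your proposal is correct and follows the route the paper intends: the theorem is stated as a recapitulation with no separate proof, and each item reduces, exactly as you argue, to \Cref{weak_f} and \Cref{rem_skr} for (a), to \Cref{biann_non} (or the last claim of \Cref{non_deg}) plus \Cref{dim_cgint} for (b), to \Cref{thm_mod} and the cointegral identity for (c), and to \Cref{dim_cgint}, \Cref{thm_gcoin} and the standard finite-dimensional duality between characters of $\cH'$ and group-like elements of $\cH$ for (d). The only point worth flagging is notational: since $\phi\cdot a\in\cA'$ can only be paired against the second leg of $\Delta(\tilde\Lambda)\in\cH\otimes\cA$, the expression $\tilde\Lambda*(\phi\cdot a)$ must be read as $(\id\otimes(\phi\cdot a))(\Delta(\tilde\Lambda))$ --- the reading you adopt, and the only one under which $\tilde\Lambda*\phi=g$ is even meaningful --- rather than via the convention $a*\mu=(\mu\otimes\id)(\Delta(a))$ stated in \Cref{Prelim_Sec} for $\mu\in\cH'$.
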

\section{Integrals and cointegrals for left coideal subalgebras of Taft Hopf algebras}\label{example_Taft}
Following \cite{tft}, we have
\begin{definition}\label{def.tft}
  Let $n$ be a positive integer and $\omega\in\CC$ a primitive $n^{th}$ root of unity. The {\it Taft Hopf algebra} $\cH_{n^2}$ is the algebra $\CC[x]/(x^n)\rtimes \CC[G]$, where $G=\mathbb{Z}/n\mathbb{Z}$ is generated by $g$ and its action on the truncated polynomial ring $\CC[x]/(x^n)$ is by
  \begin{equation*}
    gxg^{-1} = \omega x. 
  \end{equation*}
  The coalgebra structure is given by
  \begin{equation*}
    \Delta(g) = g\otimes g,\quad \Delta(x)=x\otimes 1 + g\otimes x. 
  \end{equation*}
\end{definition}
In what follows  we give the formulas for  $\varepsilon$-integrals assigned to left coideal subalgebras of Taft Hopf algebra  $\cH_{n^2}$. See \cite[Section 4]{Chir_Kasp_Szu} for the description of all left coideals subalgebras of $\cH_{n^2}$  and right group-like projections assigned to  simple coideals. For more detailed exposition of what follows see \cite{Szulim_Master}. 

Let us first discuss semisimple  coideals assigned to right group-like projections which are not two sided. 
For every $\beta\in\CC^\times$ the element $P_\beta\in\cH_{n^2}$ defined by $P_\beta=\frac{1}{n}\sum_{k=0}^{n-1} (g+\beta x)^k$ is a non-degenerate right (but not left) group-like projection satisfying  \begin{equation}\label{DelP}\Delta(P_\beta) = \frac{1}{n}\sum_{k=0}^{n-1} S((g+\beta x)^k)g^{-1}\otimes (g+\beta x)^{n-k-1}.\end{equation}
In particular \begin{equation}\label{vi}\cV_{P_\beta}=\textrm{span}\{(g+\beta x)^k:k=0,1,\ldots n-1\},\end{equation}  see also \cite[
Proposition 4.6]{Chir_Kasp_Szu}. 
Using \Cref{thm_gcoin} and \Cref{DelP} we see that  $\dim L^{g^{-1}}_{\cV_{P_\beta}} = 1$ and  $\dim L^{g^k}_{\cV_{P_\beta}} = 0$ for $k\neq -1\mod n$. There exists   $g^{-1}$-cointegral  $\phi_{g^{-1}}\in L^{g^{-1}}_{\cV_{P_\beta}}$ such that \[(\id\otimes\phi_{g^{-1}})(\Delta({P_\beta})) = g^{-1}\] Using \Cref{DelP} we get \[\phi_{g^{-1}}((g+\beta x)^k) = n\delta_{k,n-1}\] where  $k\in\{0,1,\ldots,n-1\}$. 
Note that in the discussed case, the element  $z_{g^{-1}}\in\cV_{P_\beta}$ entering the proof of \Cref{thm_semi}  is equal $(g+\beta x)^{n-1} = (g+\beta x)^{-1} $ which is evidently an invertible element (c.f. the assumptions of  \Cref{thm_semi}). 

Next we shall discuss semisimple coideals assigned to two sided group-like projections. For every divisor $d$ of $n$ we have a Hopf subalgebra $H_d\subset \cH_{n^2}$   generated by  $g^d$, see \cite[Proposition 4.6]{Chir_Kasp_Szu}). Clearly, $H_d = \CC[G'] $ where $G'$ is the cyclic group of order $\frac{n}{d}$ (generated by $g^d$).  Defining  $P_d:=\frac{d}{n}\sum_{k=0}^{\frac{n}{d}-1}g^{dk}$ we can check that $P_d\in L^{H_d}_\varepsilon $. Observing that 
\[\Delta(P_d) =\frac{d}{n} \left(\sum_{k=0}^{\frac{n}{d}-1}g^{dk}\otimes g^{dk}\right)\] we see  that   $\dim L^{g^{j}}_{H_d} = 1$ if and only if $j =0\mod d$ (see  \Cref{thm_two_sided}).   More precisely  the system of vectors   $(g^{dm})_{m\in\{0,\ldots,\frac{n}{d}-1\}}$ is a  basis of $H_d$ and defining \[\phi_{g^{dk}}(g^{dm}) =\frac{n}{d} \delta_{m,k}\] where   $m\in\{0,\ldots,\frac{n}{d}-1\}$, we get a non-zero element of  $L^{g^{dk}}_{H_d}$.
It is easy to check that $z_{g^{dm}} = g^{dm}$ which is evidently an invertible element of $H_d$. 

Let us discuss  non-semisimple left coideal subalgebras of $\cH_{n^2}$.
For every divisor $d$ of $n$ we have a left coideal subalgebra $\cN_{d,x}$ generated by $x$ and $g^d$, see \cite[Proposition 4.3]{Chir_Kasp_Szu}) and defining  $\Lambda:=\sum_{k=0}^{\frac{n}{d}-1}g^{dk} x^{n-1}$ we can check that $\Lambda\in L^{\cN_{d,x}}_\varepsilon $. Observing that 
\[\Delta\left(\sum_{k=0}^{\frac{n}{d}-1}g^{dk}x^{n-1}\right) = \left(\sum_{k=0}^{\frac{n}{d}-1}g^{dk}\otimes g^{dk}\right) \left(\sum_{j=0}^{n-1} S(x^j)g^{-1}\otimes x^{n-j-1}\right)\] and using    \Cref{thm_two_sided} we see that   $\dim L^{g^{j}}_{\cN_{d,x}} = 1$ if and only if $j = -1\mod d$.   More precisely  the system of vectors   $(g^{dm}x^l)_{m\in\{0,\ldots,\frac{n}{d}-1\}, l\in\{0,n-1\}}$ is a  basis of $\cN_\Lambda$ and defining \[\phi_{g^{dk-1}}(g^{dm}x^l) = \delta_{l,n-1}\delta_{m,k}\]  where  $m\in\{0,\ldots,\frac{n}{d}-1\}$ and $l\in\{0,n-1\}$, we get a non-zero element of  $L^{g^{dk-1}}_{\cN_{d,x}}$.

\subsection*{Acknowledgments}
 PK was partially supported by the NCN (National Center of Science) grant
 2015/17/B/ST1/00085. 

I am grateful to Alex Chirvasitu for the discussions over the subject of this paper.


\bibliography{taft}{}
\bibliographystyle{plain}

\end{document}